\newtheorem{theorem}{Theorem}[section]
\newtheorem{lemma}[theorem]{Lemma}
\newtheorem{corollary}[theorem]{Corollary}
\theoremstyle{definition}
\theoremstyle{remark}
\newtheorem{remark}[theorem]{Remark}
\def\R{\mathbb{R}}
\def\Z{\mathbb{Z}}
\def\T{\mathbb{T}}
\def\C{\mathbb{C}}
\newcommand{\fe}{\mathrm{e}}
\newcommand{\bR}{{\mathbb R}}
\newcommand{\bT}{{\mathbb T}}
\numberwithin{equation}{section}
\begin{document}

\title[Fourier integrator for NLS equation]{A first-order Fourier integrator for the nonlinear Schr\"odinger equation on $\T$ without loss of regularity}

\author[Y. Wu]{Yifei Wu}
\address{\hspace*{-12pt}Y.~Wu: Center for Applied Mathematics, Tianjin University, 300072, Tianjin, China}
\email{yerfmath@gmail.com}

\author[F. Yao]{Fangyan Yao}
\address{\hspace*{-12pt}F.~Yao: School of Mathematical Sciences,
South China University of Technology,
 Guangzhou, Guangdong 510640, P. R. China}
\email{yfy1357@126.com}

\subjclass[2010]{65M12, 65M15, 35Q55}


\keywords{Nonlinear Schr\"{o}dinger equation, numerical solution, first-order convergence, low regularity, fast Fourier transform}

\maketitle

\begin{abstract}\noindent
In this paper, we  propose a first-order  Fourier integrator for solving the cubic nonlinear Schr\"odinger equation in one dimension. The scheme is explicit and  can be implemented using the fast Fourier transform. By a rigorous analysis, we prove that the new scheme provides the first order accuracy in $H^\gamma$ for any initial data belonging to $H^\gamma$, for any $\gamma >\frac32$. That is, up to some fixed time $T$,  there exists some constant $C=C(\|u\|_{L^\infty([0,T]; H^{\gamma})})>0$, such that
$$
\|u^n-u(t_n)\|_{H^\gamma(\T)}\le  C \tau,
$$
 where $u^n$ denotes the numerical solution at $t_n=n\tau$. Moreover, the mass of the numerical  solution $M(u^n)$ verifies
$$
\left|M(u^n)-M(u_0)\right|\le  C\tau^5.
$$
In particular, our scheme dose not cost any additional derivative for the first-order convergence and the numerical solution obeys the almost mass conservation law. Furthermore, if $u_0\in H^1(\T)$, we rigorously prove that
$$
\|u^n-u(t_n)\|_{H^1(\T)}\le  C\tau^{\frac12-},
$$
where $C= C(\|u_0\|_{H^1(\T)})>0$.
\end{abstract}


\section{Introduction}\label{sec:introduction}

In this paper, we are
concerned with the numerical integration of the  cubic nonlinear Schr\"odinger equation  (NLS) on a torus:
\begin{equation}\label{model}
 \left\{\begin{aligned}
& i\partial_tu(t,x)+\partial_{xx} u(t,x)
 +\lambda|u(t,x)|^2u(t,x)=0,
 \quad t>0,\ x\in\T,\\
 &u(0,x)=u_0(x),\quad x\in\bT,
 \end{aligned}\right.
\end{equation}
where $\bT=(0,2\pi)$, $\lambda=\pm1$, $u=u(t,x):\bR^{+}\times\bT\to\C$ is the unknown
and $u_0\in H^\gamma(\bT)$ with some $\gamma\ge 0$ is a given initial data.
Here we only consider the case $\lambda=-1$, and the case $\lambda=1$ is similar.

It is known that the local well-posedness of \eqref{model} has been established in $H^{\gamma}$
for $\gamma\geq0$ in  one dimension space, referring to \cite{Bo}.  Moreover, for $L^2$ solution of \eqref{model}, we have the following mass conservation law:
\begin{align}
M\big(u(t)\big)&=\frac1{2\pi}\int_\T |u(t,x)|^2\,d x =M(u_0). \label{mass}
\end{align}
Then the global well-posedness in $L^2$ is followed directly by the mass conservation law and the local theory.

There has been substantial research undertaken in numerical analysis of \eqref{model}.
In order to do numerical discretizations in space and time,
many methods have been proposed and extensively studied
by assuming that the exact solution is smooth enough,
for example  in finite difference methods, operator splitting, spectral methods,
discontinuous Galerkin methods and exponential integrators.
 Much of   the literature   focusses on these classical numerical schemes to
establish the convergence results of  the solution,
 referring to \cite{hairer,Hochbruck,holden,Splitting}.
  For the nonlinear Schr\"{o}dinger equation,
 we can further refer to   \cite{besse,cano,celledoni,cohen,djuardin,faou,ignat,jahnke,Lubich,thalhammer}.
 For the Korteweg-de Vries equation,  refer to
 \cite{courtes,hofmanova,holden2011,holden2012,gubinelli,ostermann-su} for recent works.

For splitting methods, one of the most  popular splitting is Strang splitting,
which can  speed up calculation for problems involving operators on very different time scales.
In particular, splitting methods are especially effective if the equation splits into two equations
 which can be directly integrated such as the Korteweg-de Vries equation and
 the nonlinear Schr\"{o}dinger equation.
For example, for the nonlinear Schr\"{o}dinger equation, the first-order and the second-order convergence
in $H^\gamma$ was achieved for the initial data in $H^{\gamma+2}$ and $H^{\gamma+4}$ respectively, see \cite{Lubich}.

For exponential integrators,
to the best of our knowledge, which were earlier considered by Hersch in \cite{hersch}, Certaine in \cite{certaine} and Pope in \cite{pope}.
Then Hochbruck, Lubich, and Selhofer  \cite{hochbruck-l}
 put forward  the term ``exponential integrator",
  which created a powerful push of the exponential integrator.
  Furthermore,  this work  was regarded as  the first actual
implementation of the exponential integrator.
  Recently, exponential integrators have become an active area of research and
  have a good development,
  more on the rich history and research results of exponential integrators can be found
  in the literature \cite{Hochbruck} by Hochbruck and Ostermann.
   Originally developed for solving stiff differential equations by Hochbruck and Ostermann in
   \cite{hochbruck2005a,hochbruck2005b},
   the methods have been used to solve partial differential equations including hyperbolic,
   as well as parabolic problems such as the heat equation.

As mentioned above,  the schemes above were constructed under the assumption that
the exact solution is smooth enough.
However, in practice the initial data may not be ideally smooth due to multiple reasons
such as measurements or noise.
Rough data may appear naturally in some applications:
initial data corrupted with noise (as in nonlinear optics applications).
When the solution of the equation is not sufficiently smooth in space,
the convergence of a certain order only holds under sufficient additional regularity assumptions
on the solution.
It can be regarded as the following  error structure of the scheme
$$
\tau^\nu(-\Delta)^\lambda u(t), \qquad \nu,\ \lambda\geq0,
$$
where  $\tau$ denotes the time step size.
The structure explains that there are $2\lambda$ derivatives loss to reach the $\nu$-order convergence.
For example, for the nonlinear Schr\"{o}dinger equation, the  error structure of
the scheme in \cite{Lubich} is
$$
\tau^\nu(-\Delta)^\nu u(t), \qquad \nu\geq0.
$$
It implies that  in order to obtain $\nu$-order convergence, $2\nu$ derivatives loss is needed.
Then the essential work is to design a numerical scheme
such that $\nu$-order convergence is achieved with $\lambda$ as small as possible.
 To bring down the regularity requirements,
more recent attention has focused on  so-called low-regularity integrators (LRIs)
that based on the exponential integrators.
Unlike the classical numerical schemes,
this method can break the natural order barrier to reach the optimal convergence rate.
Of course, it will encounter many difficulties,
the main difficulty is the design of LRIs, which needs the scheme being defined point-wise
 in the physical space while requiring  as few additional derivatives as possible.
 Moreover, this point-wise evaluation requires O(NlogN) operations, in general.
 The low-regularity integrators have already been considered for some important models
 such as the  nonlinear Schr\"{o}dinger equation (NLS),  the Korteweg-de Vries equation.

For the Korteweg-de Vries equation,
Hofmanov\'a and Schratz \cite{hofmanova} introduced an exponential integrator
to obtain first-order convergence in $H^1$
 with initial data in $H^3$.
 Later, Wu and Zhao \cite{WuZhao-1}
   established the second-order convergence result
   in $H^\gamma$ for initial data in $H^{\gamma+4}$,
 which proved rigorously in theory the validity of the scheme  that was proposed in  \cite{hofmanova}.
Very recently, Wu and Zhao \cite{wu} proposed the Embedded exponential-type low-regularity integrators
and proved the first-order and second-order convergence
in $H^\gamma$ under $H^{\gamma+1}$-data and $H^{\gamma+3}$-data respectively.

For the nonlinear Schr\"{o}dinger equations, Ostermann and Schratz \cite{lownls}
introduced  a new exponential-type numerical scheme,
and the first order convergence was achieved under the requirement of only one additional derivative.
That is
$$
\|u^n-u(t_n)\|_{H^\gamma(\T^d)}\lesssim \tau,
$$
up to some fixed time for the initial data $u_0\in H^{\gamma+1}(\T^d)$, $\gamma>\frac{d}{2}$,
where $u^n$ denotes the numerical solution at $t_n=n\tau$.
More precisely,
 the error behavior of  the  numerical scheme is dominated by
$$
\tau (-\Delta)^\frac12 u(t),
$$
which breaks the ``natural order barrier" of $\tau^\frac12$ for $(-\Delta)^\frac12$-loss.
This presents a lower regularity assumptions on the data compared to the splitting or
exponential integrator schemes.
Later, for the second-order convergence, Kn\"{o}ller, Ostermann and Schratz \cite{lownls2}
gave a new type of integrator
and the scheme requires two additional derivatives of the solution in one space dimension
 and three derivatives in higher space dimensions.
Whereafter, Ostermannn, Rousset and Schratz \cite{ostermann,ostermann2020}
considered $H^s$, $0<s\leq1$ solutions in $L^2$
 with order  $\nu<1$ in dimensions $d\leq 3$.
For the quadratic nonlinear
Schr\"{o}dinger equation and the nonlinear Dirac equation,
the first-order convergence in $H^\gamma$ without any loss of derivatives
in one space dimension, see \cite{lownls}.

 In this paper, we are aiming to get the first-order convergence of \eqref{model}
without any loss of derivatives
by introducing a new type low-regularity exponential integrator.
That is, we obtain the following result
$$
\|u^n-u(t_n)\|_{H^\gamma(\T)}\lesssim \tau,
$$
up to some fixed time for the initial data $u_0\in H^{\gamma}(\T)$, $\gamma>\frac32$,
where $u^n$ denotes the numerical solution at $t_n=n\tau$.

Now we explain our argument briefly. Our designation is based on the Phase-Space analysis of the nonlinear dynamics.
In particular, we focus our attention on the following time integral,
\begin{align}
\int_0^\tau \fe^{is
 ( k ^2+ k _1^2- k _2^2- k _3^2)}\,ds,
 \quad
 \mbox{ with } \quad k=k_1+k_2+k_3.
\end{align}
Using the following formula of the phase function,
$$
 k ^2+ k _1^2- k _2^2- k _3^2=2(k_1+k_2)(k_1+k_3),
$$
we write that for any $k\ne 0$,
\begin{align*}
 \fe^{is
 ( k ^2+ k _1^2- k _2^2- k _3^2)}
 =&\sum\limits_{j=2,3}\frac{k_1+k_j}{k} \fe^{is
 ( k ^2+ k _1^2- k _2^2- k _3^2)}
  -\frac{k_1}{k} \fe^{is
 ( k ^2+ k _1^2- k _2^2- k _3^2)}.
\end{align*}
Then the first  term is integrable. Indeed,  by direct calculation  we have that
\begin{align*}
\sum\limits_{j=2,3}\int_0^\tau \frac{k_1+k_j}{k} \fe^{is
 ( k ^2+ k _1^2- k _2^2- k _3^2)}\,ds
 =\sum\limits_{j=2,3} \frac{1}{2ik(k_1+k_j)} \left(\fe^{i\tau
 ( k ^2+ k _1^2- k _2^2- k _3^2)}-1\right).
\end{align*}
Unfortunately,  the second term can not be integrated
in the physical space exactly. To overcome this difficulty,  we use another formula of the phase function,
$$
 k ^2+ k _1^2- k _2^2- k _3^2=2kk_1+2k_2k_3.
$$
Therefore we have the formula
$$
\fe^{is
 ( k ^2+ k _1^2- k _2^2- k _3^2)}
   =  \fe^{2iskk_1}+O(s|k_2||k_3|).
$$
This yields that  for any $k\ne 0$,
\begin{align*}
\int_0^\tau \frac{k_1}{k} \fe^{is
 ( k ^2+ k _1^2- k _2^2- k _3^2)}\,ds
= \frac{1}{2ik^2} \left(\fe^{2i\tau
 kk_1}-1\right)+\tau^2O( |k|^{-1}|k_1| |k_2| |k_3|).
\end{align*}
Based on the rigorous analysis, we construct the following numerical solution of \eqref{model} as
\begin{align}\label{numerical}
u^{n+1}=\Psi(u^n), \quad n=0,1\ldots,\frac{T}{\tau}-1,
\end{align}
with $u^0=u_0$, where
\begin{align}\label{psi}
\Psi(u^n)=&\fe^{i\tau\left(-2M_0-2P_0\partial_x^{-1}+\partial_x^2\right)}u^n
               -i\tau \Pi_0\left(|u^n|^2u^n\right)+2i\tau M_0 \Pi_0(u^n)\notag\\
     &
     -\frac12 \partial_x^{-2}
           \Big[\big(\fe^{-i\tau\partial_x^2}\bar u^n\big)
              \cdot \fe^{i\tau\partial_x^2} \big( u^n\big)^2\Big]
                +\frac12 \fe^{i\tau\partial_x^2} \partial_x^{-2}
           \big[|u^n|^2 u^n\big]\notag\\
     &+ \partial_x^{-1}
          \Big[ \big(\fe^{i\tau\partial_x^2} u^n\big)
              \cdot \partial_x^{-1}
                \Big(\big|\fe^{i\tau\partial_x^2} u^n\big|^2 \Big)   \Big]
                -\fe^{i\tau\partial_x^2} \partial_x^{-1}
          \Big[ u^n
              \cdot \partial_x^{-1}
                \big(|u^n|^2 \big)   \Big].
\end{align}
Here we denote $\Pi_0(f)$ to be the zero mode of the function $f$, that is,
\begin{align}\label{def:ii}
\Pi_0(f)=\frac1{2\pi} \int_\T f(x)\,dx.
\end{align}
Moreover, $M_0, P_0$ are defined by 
\begin{align*}
M_0=M(u_0)= \Pi_0\left(|u_0|^2\right); \quad
P_0=\frac1{2\pi} \int_{\T} u_0 \partial_x\overline{u_0}\,d x=\Pi_0\left(u_0\partial_x \overline{u_0}\right).
\end{align*}


Now, we state the convergence theorem of the presented (semi-discretized) LRI method
given in \eqref{numerical} in one space dimension.
\begin{theorem}\label{result1}
Let $u^n$ be the numerical solution \eqref{numerical} of the  equation \eqref{model}
up to some fixed time $T>0$. Under assumption that $u_0\in H^{\gamma}(\bT)$ for some $\gamma>\frac{3}{2}$,  there exist constants $\tau_0,C>0$ such that for  any $0<\tau\leq\tau_0$,
\begin{equation}
  \|u(t_n,\cdot)-u^n\|_{H^\gamma}\leq C\tau,\quad n=0,1\ldots,\frac{T}{\tau},
\end{equation}
 where the constants $\tau_0$ and $C$ depend only on $T$ and $\|u\|_{L^\infty((0,T);H^{\gamma})}$.
\end{theorem}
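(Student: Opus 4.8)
The plan is to follow the classical Lady Windermere's fan argument: establish a local truncation error of order $\tau^2$ in $H^\gamma$, prove a one-step stability estimate with Lipschitz constant $1+C\tau$, and combine the two through a discrete Gronwall inequality to reach the global order-$\tau$ bound. Writing $\Phi^\tau$ for the exact flow of \eqref{model} over one time step, Duhamel's formula gives
\begin{align*}
\Phi^\tau(v)=\fe^{i\tau\partial_x^2}v-i\int_0^\tau\fe^{i(\tau-s)\partial_x^2}\big(|u(s)|^2u(s)\big)\,ds,\qquad u(0)=v,
\end{align*}
so the central task is to show that $\|\Phi^\tau(v)-\Psi(v)\|_{H^\gamma}\lesssim\tau^2$ uniformly for $v$ in a fixed ball of $H^\gamma$, where $\Psi$ is the map defined in \eqref{psi}.

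For the local error I would proceed through two reductions. First, inside the cubic nonlinearity I replace the exact solution $u(s)$ by the free evolution $\fe^{is\partial_x^2}v$; since $\gamma>\tfrac12$ makes $H^\gamma$ a Banach algebra and the Schr\"odinger group is unitary on $H^\gamma$, the difference $u(s)-\fe^{is\partial_x^2}v$ is $O(s)$ in $H^\gamma$, so the resulting perturbation of the integral is $O(\tau^2)$ with no loss of derivatives. This leaves the purely oscillatory trilinear integral carrying the phase $s(k^2+k_1^2-k_2^2-k_3^2)$ displayed in the introduction. Applying the two phase identities $k^2+k_1^2-k_2^2-k_3^2=2(k_1+k_2)(k_1+k_3)=2kk_1+2k_2k_3$ as in the introduction splits this integral into the four explicitly integrable pieces that constitute the nonlinear part of $\Psi$ in \eqref{psi}, plus a remainder whose Fourier multiplier is $\tau^2\,O\big(|k|^{-1}|k_1||k_2||k_3|\big)$. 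The resonant zero mode $k=0$, where the division by $k$ breaks down, is isolated and accounted for exactly by the $M_0$, $P_0$ and $\Pi_0$ terms in \eqref{psi}; one checks along the way that $\partial_x^{-1}$ and $\partial_x^{-2}$ act only on mean-zero functions, hence are bounded (indeed smoothing) on $H^\gamma$.

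The heart of the matter, and the step I expect to be hardest, is bounding the trilinear remainder in $H^\gamma$ without spending a single derivative. Write $a_m=\langle m\rangle^\gamma|\hat v(m)|$, so that $\|a\|_{\ell^2}=\|v\|_{H^\gamma}$, and assume without loss of generality $|k_1|\ge|k_2|\ge|k_3|$; since $k=k_1+k_2+k_3$ this forces $\langle k\rangle\lesssim\langle k_1\rangle$. Because $\gamma>1$ I may use $\frac{\langle k\rangle^\gamma}{|k|}\lesssim\langle k\rangle^{\gamma-1}\lesssim\langle k_1\rangle^{\gamma-1}$ together with $|k_1||k_2|\le\langle k_1\rangle\langle k_2\rangle$ to bound the weighted symbol of the remainder by
\begin{align*}
\frac{\langle k\rangle^\gamma}{|k|}\cdot\frac{|k_1||k_2||k_3|}{\langle k_1\rangle^\gamma\langle k_2\rangle^\gamma\langle k_3\rangle^\gamma}\;\lesssim\;\langle k_1\rangle^{\gamma-1}\,\frac{\langle k_1\rangle\langle k_2\rangle\,|k_3|}{\langle k_1\rangle^\gamma\langle k_2\rangle^\gamma\langle k_3\rangle^\gamma}\;\lesssim\;\langle k_2\rangle^{1-\gamma}\langle k_3\rangle^{1-\gamma}.
\end{align*}
Consequently the remainder is dominated, in absolute value, by the convolution $a*c*c$ with $c_m=\langle m\rangle^{1-\gamma}a_m$, and Young's inequality gives $\|a*c*c\|_{\ell^2}\le\|a\|_{\ell^2}\|c\|_{\ell^1}^2$. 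The condition $\gamma>\tfrac32$ is precisely what makes $\langle\cdot\rangle^{1-\gamma}\in\ell^2$, hence $\|c\|_{\ell^1}\le\|\langle\cdot\rangle^{1-\gamma}\|_{\ell^2}\|a\|_{\ell^2}\lesssim\|v\|_{H^\gamma}$, and the remainder is $O(\tau^2)$ in $H^\gamma$ with a constant depending only on $\|v\|_{H^\gamma}$. This is exactly where the threshold $\gamma>\tfrac32$ enters: the single negative power $|k|^{-1}$ gained on the output frequency, redistributed as $\langle k_1\rangle^{\gamma-1}$ against the largest input, is what absorbs the three derivatives $|k_1||k_2||k_3|$ in the numerator.

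To globalize, set $e^n=u(t_n)-u^n$ and decompose one step as the sum of the local error $\Phi^\tau(u(t_n))-\Psi(u(t_n))$, the exact-flow difference $\Phi^\tau(u(t_n))-\Phi^\tau(u^n)$, and the difference of local errors $(\Psi-\Phi^\tau)(u(t_n))-(\Psi-\Phi^\tau)(u^n)$. The first is bounded by $C\tau^2$ from the local-error analysis above; the second by $(1+C\tau)\|e^n\|_{H^\gamma}$ using the Lipschitz continuity of the exact flow on a fixed ball, which follows from the local well-posedness theory in $H^\gamma$; and the third, a difference version of the local-error estimate, is handled by the very same multilinear bounds applied to differences and contributes only $C\tau^2\|e^n\|_{H^\gamma}$. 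All constants depend only on the radius of the ball, so a simultaneous induction keeps $\|e^n\|_{H^\gamma}\le1$ (hence $\|u^n\|_{H^\gamma}\le\|u\|_{L^\infty((0,T);H^\gamma)}+1$, so the iterates never leave the ball) as soon as $\tau\le\tau_0$. The resulting recursion
\begin{align*}
\|e^{n+1}\|_{H^\gamma}\le C\tau^2+(1+C\tau)\|e^n\|_{H^\gamma}
\end{align*}
then closes by the discrete Gronwall inequality to yield $\|e^n\|_{H^\gamma}\le C\fe^{CT}\tau$ for all $n\le T/\tau$, which is the asserted first-order estimate, with $\tau_0$ and $C$ depending only on $T$ and $\|u\|_{L^\infty((0,T);H^\gamma)}$.
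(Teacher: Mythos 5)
Your proposal is correct, and its skeleton --- $O(\tau^2)$ consistency obtained from the two phase identities $\phi=2(k_1+k_2)(k_1+k_3)=2kk_1+2k_2k_3$, a $(1+C\tau)$ one-step stability bound, and discrete Gronwall --- is exactly the paper's (Section 3 derives the scheme with remainder multipliers $\tau^2 O(|k|^{-1}|k_1||k_2||k_3|)$ and $\tau^2 O(k_2k_3)$ at $k=0$; Section 4 runs the error recursion). Where you genuinely diverge is the key trilinear remainder bound: the paper's Lemma \ref{lem:An} (i) passes through Plancherel to $\|(|\nabla|v)^3\|_{H^{\gamma-1}}$ and invokes the Kato--Ponce inequality with $\gamma-1>\tfrac12$, whereas you prove the same estimate by hand, via the WLOG ordering $|k_1|\ge|k_2|\ge|k_3|$, redistribution of $\langle k\rangle^{\gamma}/|k|\lesssim\langle k_1\rangle^{\gamma-1}$ onto the dominant frequency, and Young's inequality with $\langle\cdot\rangle^{1-\gamma}\in\ell^2$ precisely when $\gamma>\tfrac32$. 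Your version is more elementary and makes the origin of the threshold transparent; the paper's is shorter and reuses a standard lemma that also serves the other multipliers in Lemma \ref{lem:An}. A second, minor organizational difference: you globalize through the Lipschitz continuity of the exact flow plus a difference form of the defect, while the paper Lipschitz-estimates the numerical map $\Phi^n$ directly, term by term ($\Phi^n_1,\dots,\Phi^n_5$), obtaining $(1+C\tau)\|e^n\|_{H^\gamma}+C\tau\|e^n\|_{H^\gamma}^3$ and running the same bootstrap to keep iterates in a fixed ball; the two routes are interchangeable. Two wording-level inaccuracies to repair, neither fatal: the zero mode $k=0$ is not matched ``exactly'' by the $\Pi_0$ term --- the paper freezes the phase there at the cost of a $\tau^2\,\mathscr{F}T_3(k_2k_3;v)(0)$ error, controlled by Lemma \ref{lem:An} (iii) --- and identifying the resonant set $k_1+k_3=0$ with the constants $M_0,P_0$ computed from $u_0$ (plus the exponentiation $\fe^{-2i\tau(M_0+P_0\partial_x^{-1})}$, costing a further $\tau^2 T_1(1;v)$) uses the exact conservation laws \eqref{mass} and \eqref{P-cons}, which your write-up leaves implicit; also, your claimed $O(\tau^2\|e^n\|_{H^\gamma})$ for the defect difference is stronger than needed, since $O(\tau\|e^n\|_{H^\gamma})$ already closes the recursion.
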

Our theorem above improves the result in \cite{lownls} in one space dimension. In particular, our scheme does not loss any regularity in this case, which is the best one can expect in this sense. We believe that the idea used in this paper could be applied to the other models which will be studied in the forthcoming papers.

Based on the above theorem, we  also obtain $\frac12-$ order convergence
in $H^1(\T)$ with initial data in $ H^1(\bT)$, where $\frac12-$ denotes $\frac12-\epsilon$ for any arbitary small  $\epsilon>0$.
   It is practically reasonable to obtain lower convergence under lower regularity assumptions, because the accuracy order of the scheme in time and in space need to be rather equal.
\begin{corollary}\label{corollary}
Let $u^n$ be the numerical solution \eqref{numerical} of the  equation \eqref{model}
up to some fixed time $T>0$. Under assumption that $u_0\in H^1(\bT)$,  there exist constants $\tau_0,C>0$ such that for  any $0<\tau\leq\tau_0$,
\begin{equation}
  \|u(t_n,\cdot)-u^n\|_{H^1}\leq C\tau^{\frac12-},\quad n=0,1\ldots,\frac{T}{\tau},
\end{equation}
 where the constants $\tau_0$ and $C$ depend only on $T$ and $\|u\|_{L^\infty((0,T);H^1)}$.
\end{corollary}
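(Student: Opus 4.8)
The plan is to deduce Corollary~\ref{corollary} from the local-error/stability machinery already set up for Theorem~\ref{result1}, replacing the consistency estimate of the single ``bad'' trilinear remainder by an interpolated one that only costs $H^1$ regularity. First I would recall, as in the proof of Theorem~\ref{result1}, that $\Psi$ is stable in $H^1$: since $H^1(\bT)\hookrightarrow L^\infty(\bT)$ is a Banach algebra and every operator appearing in \eqref{psi} (the propagator $\fe^{i\tau\partial_x^2}$, the multipliers $\partial_x^{-1},\partial_x^{-2}$ on the relevant mean-zero combinations, and $\Pi_0$) is bounded on $H^1$ uniformly in $\tau$, one has a Lipschitz bound $\|\Psi(v)-\Psi(w)\|_{H^1}\le (1+C\tau)\|v-w\|_{H^1}$ on balls of $H^1$. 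Hence a discrete Gronwall argument controls the global error by $T\tau^{-1}$ times the worst one-step local error, provided the numerical solution stays bounded in $H^1$; the latter a priori bound I would obtain by the same bootstrap as in Theorem~\ref{result1}, now carried out in $H^1$ and using the almost mass conservation established for \eqref{numerical}.

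Next I would isolate the only term in the one-step error that is not already $O(\tau^2)$ in $H^1$ for $H^1$ data. Writing the phase as $k^2+k_1^2-k_2^2-k_3^2=2kk_1+2k_2k_3$, the quantity the scheme approximates is $\frac{k_1}{k}\int_0^\tau \fe^{2is(kk_1+k_2k_3)}\,ds$, while the scheme keeps $\frac{k_1}{k}\int_0^\tau \fe^{2iskk_1}\,ds=\frac{1}{2ik^2}(\fe^{2i\tau kk_1}-1)$. The remainder is therefore $\frac{k_1}{k}\int_0^\tau \fe^{2iskk_1}(\fe^{2isk_2k_3}-1)\,ds$, whose modulus is bounded both by $\frac{|k_1|}{|k|}\tau^2|k_2||k_3|$ (the bound used for $\gamma>\frac32$) and, crudely, by $\frac{|k_1|}{|k|}\tau$. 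The new ingredient is to interpolate between these two: for every $\theta\in[0,1]$,
\[
\Big|\tfrac{k_1}{k}\int_0^\tau \fe^{2iskk_1}(\fe^{2isk_2k_3}-1)\,ds\Big|\lesssim \tfrac{|k_1|}{|k|}\,\tau^{1+\theta}\,(|k_2||k_3|)^{\theta}.
\]

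Then I would estimate the trilinear remainder $\mathcal R_\theta$ carrying the symbol $\frac{|k_1|}{|k|}(|k_2||k_3|)^\theta$ in $H^1$. For $k\ne0$ one has $\langle k\rangle/|k|\le\sqrt2$ (all nonzero frequencies on $\bT$ are integers), so the $H^1$ weight is harmless and the symbol is dominated by the separable product $|k_1|\cdot|k_2|^\theta\cdot|k_3|^\theta$. Passing to physical space this factorizes, and placing the factor carrying the full derivative in $L^2$ and the two factors carrying $\theta$ derivatives in $L^\infty$, Hölder gives
\[
\|\mathcal R_\theta\|_{H^1}\lesssim \|\partial_x u\|_{L^2}\,\big\||\partial_x|^{\theta}u\big\|_{L^\infty}^2 .
\]
Since $|\partial_x|^\theta u\in H^{1-\theta}$ and $H^{s}(\bT)\hookrightarrow L^\infty(\bT)$ for $s>\frac12$, the right-hand side is finite for every $\theta<\frac12$, with a constant that blows up as $\theta\uparrow\frac12$; this is precisely the source of the $\frac12-$ loss. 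The zero mode $k=0$ is absorbed by the $\Pi_0$, $M_0$ and $P_0$ corrections in \eqref{psi} exactly as in Theorem~\ref{result1}, and the remaining trilinear pieces of \eqref{psi} share the same structure and are treated identically.

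Combining the interpolated consistency bound $\|\mathcal E^n\|_{H^1}\lesssim \tau^{1+\theta}\|u\|_{H^1}^3$ with the $H^1$ stability and summing the $T/\tau$ steps yields $\|u(t_n)-u^n\|_{H^1}\lesssim T\,\tau^{\theta}$, and choosing $\theta=\frac12-\eps$ gives the claimed $\tau^{\frac12-}$. The main obstacle is the endpoint $\theta=\frac12$: the embedding $H^{1-\theta}\hookrightarrow L^\infty$ degenerates there, so one cannot reach $\tau^{1/2}$ by this argument and must keep $\eps>0$. The secondary technical point is to guarantee that the numerical solution remains bounded in $H^1$ uniformly in $n$ and $\tau$ before the local errors can be summed; I would secure this by the same a priori bootstrap used for Theorem~\ref{result1}, which needs only the $H^1$ stability of $\Psi$ and the almost mass conservation of \eqref{numerical}.
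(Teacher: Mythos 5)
Your proposal is correct and follows essentially the same route as the paper's own proof: the paper likewise sharpens only the bound on the non-integrable piece \eqref{v-2-4}, replacing $\big|\fe^{2isk_2k_3}-1\big|\le 2\tau|k_2||k_3|$ by $2\tau^{\frac12-}|k_2|^{\frac12-}|k_3|^{\frac12-}$ (your interpolation at $\theta=\frac12-\eps$), bounds the resulting local error $\tau^{\frac32-}T_3(k^{-1}k_1k_2^{\frac12-}k_3^{\frac12-};v)$ in $H^1$ via Lemma \ref{lem:An}(ii) --- which is exactly your H\"older--Sobolev estimate with the endpoint loss at $\theta=\frac12$ --- and concludes with the same stability estimate \eqref{stability3} and Gronwall, yielding $C\tau^{\frac32-}\cdot\tau^{-1}=C\tau^{\frac12-}$. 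One small correction: the a priori $H^1$ bound on the numerical solution does not come from ``almost mass conservation of \eqref{numerical}'' (the paper establishes that only for the modified scheme \eqref{scheme}, and mass controls only the $L^2$ norm anyway); as in the paper, it follows directly from the Gronwall recursion $\|e^{n+1}\|_{H^1}\le C\tau^{\frac32-}+(1+C\tau)\|e^n\|_{H^1}+C\tau\|e^n\|_{H^1}^3$, whose iteration keeps $v^n$ in an $H^1$-ball around the exact flow.
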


Furthermore, we continue to pursue a  scheme such that it could be almost conserved in mass which  meanwhile requires as less regularity  as possible. To this purpose, we define a modified numerical scheme of \eqref{numerical} as follows. Let $\Psi$ be defined in \eqref{psi} and 
$$
F(U^n)=\Psi(U^n)-\fe^{i\tau\partial_x^2} U^n.
$$
Then we denote the functionals $G_1$, $G_2$  to be 
\begin{align}
 G_1(U)=&H(U)\fe^{i\tau\partial_x^2}U;\label{G1}\\
G_2(U)=&-\frac12 \big(H(U)\big)^2\fe^{i\tau\partial_x^2}U
      -M_0^{-1}\>H(U)\>  \mbox{Re}\> \Pi_0\Big(F(U)\>\fe^{-i\tau\partial_x^2}\bar U\Big) \>\fe^{i\tau\partial_x^2}U;
\end{align}
and
\begin{align}
H(U)=-M_0^{-1}\left[ \mbox{Re}\> \Pi_0\left(F(U)\>\fe^{-i\tau\partial_x^2}\bar U\right)
  +\frac12 \Pi_0\left(\big|F(U)\big|^2\right)\right].
\end{align}
Now the modified numerical scheme (NLRI) of \eqref{numerical} is defined by
 \begin{align}\label{scheme}
 U^{n+1}=\Psi(U^n)+G_1(U^n)+G_2(U^n),
 \end{align}
 for $ n=0,1,\cdots,\frac{T}{\tau}-1$, and $U^0=u_0$,

 Then we obtain that
\begin{theorem}\label{result}
Let $U^n$ be the numerical solution \eqref{scheme} of the  equation \eqref{model}
up to some fixed time $T>0$. Under assumption that $u_0\in H^{\gamma}(\bT)$ for some $\gamma>\frac{3}{2}$,  there exist constants $\tau_0,C>0$ such that for  any $0<\tau\leq\tau_0$ we have
\begin{equation}
  \|u(t_n,\cdot)-U^n\|_{H^\gamma}\leq C\tau,\quad n=0,1\ldots,\frac{T}{\tau}.
\end{equation}
 Moreover,
 \begin{align}\label{est:mass}
\left|M(U^n)-M(u_0)\right|\le  C\tau^5,
\end{align}
 where the constants $\tau_0$ and $C$ depend only on $T$ and $\|u\|_{L^\infty((0,T);H^{\gamma})}$.

 Furthermore, if $u_0\in H^1(\bT)$,  there exist constants $\tau_0,C>0$ such that for  any $0<\tau\leq\tau_0$,
\begin{equation}
  \|u(t_n,\cdot)-U^n\|_{H^1}\leq C\tau^{\frac12-},\quad n=0,1\ldots,\frac{T}{\tau},
\end{equation}
 where the constants $\tau_0$ and $C$ depend only on $T$ and $\|u\|_{L^\infty((0,T);H^1)}$.
\end{theorem}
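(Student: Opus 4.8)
The plan is to read \eqref{scheme} as a small, structured perturbation of the base scheme \eqref{numerical}, and to bootstrap the convergence results already available for the latter. Write $V^n=\fe^{i\tau\partial_x^2}U^n$ and $F^n=F(U^n)=\Psi(U^n)-V^n$, and set $a_n=\mbox{Re}\,\Pi_0(F^n\overline{V^n})$, $b_n=\Pi_0(|F^n|^2)$, $m_n=M(U^n)$. The first step is a purely algebraic simplification: since $\fe^{-i\tau\partial_x^2}\overline{U^n}=\overline{V^n}$, the definitions give $H(U^n)=-M_0^{-1}\big(a_n+\tfrac12 b_n\big)$ and $G_2(U^n)=c_n V^n$ with the real scalar $c_n=-\tfrac12 H(U^n)^2-M_0^{-1}H(U^n)a_n$, so that the whole step collapses to the compact form
$$U^{n+1}=\big(1+H(U^n)+c_n\big)\,V^n+F^n.$$
Thus the correction $G_1+G_2$ merely rescales the linear part $V^n$ by a real factor $1+H(U^n)+c_n$, which I will show equals $1+O(\tau^2)$.

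The quantitative heart of the matter is the near–conservation-of-mass estimate $|a_n|+|b_n|\le C\tau^2$ for the base scheme, with $C$ depending on $\|U^n\|_{H^\gamma}$. The bound $b_n\lesssim\|F^n\|_{L^2}^2=O(\tau^2)$ is immediate from $\|F^n\|_{L^2}=O(\tau)$, but the naive Cauchy–Schwarz bound on $a_n$ only gives $O(\tau)$. To gain the extra power I would expand $F^n$ in $\tau$ using the explicit form of $\Psi$ in \eqref{psi} together with the phase-space decomposition of the introduction, and show that its leading $O(\tau)$ part is annihilated by the pairing $\mbox{Re}\,\Pi_0(\,\cdot\,\overline{V^n})$. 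This is the discrete counterpart of the exact conservation law \eqref{mass}, for which $\tfrac{d}{dt}M(u)=2\mbox{Re}\,\Pi_0(\bar u\,\partial_t u)=0$. Establishing this cancellation from the concrete structure of the scheme (and controlling the $O(\tau^2)$ remainder in $H^\gamma$, which is where $\gamma>\tfrac32$ enters through the product and nonlinear estimates) is the main obstacle of the whole argument. Granting it, $|H(U^n)|\le C\tau^2$ and $|c_n|\le C\tau^4$ follow at once, and since $\fe^{i\tau\partial_x^2}$ is an isometry on $H^\gamma$,
$$\|G_1(U^n)+G_2(U^n)\|_{H^\gamma}=\big(|H(U^n)|+|c_n|\big)\|U^n\|_{H^\gamma}\le C\tau^2.$$

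With the perturbation under control, the convergence in $H^\gamma$ follows by comparison with the base scheme. Let $u^n$ solve \eqref{numerical} and set $d^n=U^n-u^n$, so $d^0=0$. Subtracting the two recursions and invoking the one-step Lipschitz stability of $\Psi$ in $H^\gamma$ (established along the proof of Theorem \ref{result1}) together with the bound above gives
$$\|d^{n+1}\|_{H^\gamma}\le \fe^{C\tau}\|d^n\|_{H^\gamma}+C\tau^2,$$
so discrete Grönwall yields $\|d^n\|_{H^\gamma}\le C\tau$, and a triangle inequality with Theorem \ref{result1} gives $\|u(t_n)-U^n\|_{H^\gamma}\le C\tau$; throughout I would carry $\|U^n\|_{H^\gamma}\le C$ as an inductive hypothesis to keep all constants uniform. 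The $H^1$ bound $\|u(t_n)-U^n\|_{H^1}\le C\tau^{\frac12-}$ is obtained identically, replacing Theorem \ref{result1} by Corollary \ref{corollary} and estimating $G_1+G_2$ in $H^1$; since the accumulated perturbation there is at worst $O(\tau^{1-})$, it is dominated by $\tau^{\frac12-}$.

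For the almost-conservation of mass, I would compute $M(U^{n+1})$ directly from the compact form. As $\fe^{i\tau\partial_x^2}$ preserves the $L^2$ norm, $\Pi_0(|V^n|^2)=m_n$, and expanding the square gives
$$M(U^{n+1})=\big(1+H(U^n)+c_n\big)^2 m_n+2\big(1+H(U^n)+c_n\big)a_n+b_n.$$
The decisive observation is that $H(U^n)+c_n$ reproduces, through order $\tau^4$, the Taylor expansion in $a_n,b_n$ of the exact mass-restoring factor $p_n-1$, where $p_n$ is the root near $1$ of $p^2 m_n+2p a_n+b_n=m_n$: the definitions of $H$ and $G_2$ are arranged precisely so that, when $m_n=M_0$, the $O(\tau^2)$ and $O(\tau^4)$ contributions cancel, leaving $H(U^n)+c_n=(p_n-1)+O(\tau^6)$ and hence a per-step defect $M(U^{n+1})-M(U^n)=O(\tau^6)$. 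The discrepancy $m_n\ne M_0$ enters only multiplied by $a_n,b_n=O(\tau^2)$, so under the bootstrap hypothesis $|M(U^n)-M_0|=O(\tau^5)$ it contributes at order $O(\tau^7)$. Summing the $O(\tau^6)$ per-step defects over the $n\le T/\tau$ steps and closing the bootstrap then gives $|M(U^n)-M_0|\le C\tau^5$.
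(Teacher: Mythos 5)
Your proposal is essentially the paper's own proof, transcribed into untwisted variables: your compact form $U^{n+1}=\big(1+H(U^n)+c_n\big)V^n+F^n$ is exactly the paper's formulation \eqref{V} of the scheme via $\tilde G_1^n,\tilde G_2^n$; your expansion of $M(U^{n+1})$ and the cancellation of its $\tau^2$- and $\tau^4$-order parts reproduce the paper's identities \eqref{lemma1:formula} and \eqref{formula-lemma2} (the quadratic-root framing via $p_n$ is a pleasant repackaging of the same algebra, not a new mechanism); your per-step bound $M(U^{n+1})-M(U^n)\le C\tau^6+C\tau^2\big|M(U^n)-M_0\big|$ with bootstrap is the paper's \eqref{est:err}; and your comparison of $U^n$ with the base scheme $u^n$ followed by a triangle inequality with Theorem \ref{result1} is a harmless variant of the paper's direct comparison with $v(t_n)$, since the required Lipschitz stability is exactly \eqref{stability3}. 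Two caveats. First, the step you rightly single out as the main obstacle, $a_n=\mbox{Re}\,\Pi_0\big(F^n\,\overline{V^n}\big)=O(\tau^2)$, is left as a sketch; the paper's Lemma \ref{def:lemG1} executes it by rewriting $\tilde F^n(V)$ in the exact integral form \eqref{Fn}, in which the cubic Duhamel term pairs to zero with $V$ (since $\big\langle -i|w|^2w,w\big\rangle=0$ pointwise in $s$) and the mass/momentum correction terms vanish in the pairing by their skew structure, leaving only the explicit $\tau^2$ remainders bounded through Lemma \ref{lem:An} — precisely the mechanism you describe, so the gap is one of execution rather than of idea. Second, in the $H^1$ branch your intermediate claim that the accumulated perturbation is $O(\tau^{1-})$ is false as stated: with only $H^1$ data the remainder $T_3(k^{-1}k_1k_2k_3;V)$ cannot be controlled (Lemma \ref{lem:An}(i) requires $\gamma>\frac32$), and one must instead use the variant $\tau^{\frac32-}\,T_3(k^{-1}k_1k_2^{\frac12-}k_3^{\frac12-};V)$, which only yields $|H(U^n)|\le C\tau^{\frac32-}$ (the paper's \eqref{est:H2}); hence the per-step perturbation is $O(\tau^{\frac32-})$ and its accumulation is $O(\tau^{\frac12-})$ — the same order as the target rate, not dominated by it. The final estimate still closes, but the corrected bookkeeping is needed, and it is exactly why the paper states Lemmas \ref{def:lemG1} and \ref{def:lemG2} in two regularity regimes.
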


To the best of our knowledge, this is the first attempt to consider the conservation laws of the numerical solution for the exponential-type integrators. 

\begin{remark}
In this paper, we present fifth-order mass convergence. However,
our method is also applicable to solve the equation \eqref{model} with arbitrary order  mass convergence by suitably adding  correction terms. 
\end{remark}

Now we slightly explain the key ingredient of the construction. Denote  $F$ to be
$$
u^{n+1}=\fe^{i\tau\partial_x^2}u^n+F(u^n),
$$
where $u^n$ is the numerical solution \eqref{numerical}.
Then we can find a functional $G$ such that
$$
\left\langle G(u^n),\fe^{i\tau\partial_x^2}u^n\right\rangle=-\left\langle F(u^n),\fe^{i\tau\partial_x^2}u^n \right\rangle,
$$
and
$$
\left\|G(u^n)\right\|_{H^\gamma}\le C\tau^2.
$$
The key point is that we only have the first-order estimate of $F^n(u^n)$ which reads
$$
\|F(u^n)\|_{H^\gamma}\le C\tau.
$$
Hence we can not choose $G(u^n)=-F(u^n)$ directly, and the cancellation in the $L^2$-inner product plays a great role in ensuring the second-order estimate of $G(u^n)$.
Based on the nice feature of $G$, we can modify the numerical solution $u^n$ and define the new scheme  by
$$
\tilde u^{n+1}=\fe^{i\tau\partial_x^2} \tilde u^n+F(\tilde u^n)+G(\tilde u^n).
$$
Then we can prove that
\begin{align*}
  \|u(t_n,\cdot)-\tilde u^n\|_{H^\gamma}\leq C\tau, \quad
\left|M(\tilde u^n)-M(u_0)\right|\le  C\tau^3.
\end{align*}
Repeating the same process, we can design a new scheme $U^n$ verifying the required accuracy  as in Theorem \ref{result}. More details will be given in Section \ref{sec:5}.


The paper is organized as follows.
In Section 2, we give some  notations and some useful lemmas.
In Section 3,
we give  the main process of the construction of the first-order scheme.
In Section 4, we devote to prove Theorem \ref{result1}.
Further discussion on the almost mass-conserved scheme is presented in Section 5.
Numerical confirmations are reported in Section \ref{sec:numerical} and conclusions are drawn in Section \ref{sec:conclusion}.

\section{Preliminary}
\subsection{Some notations}\label{subsec1}
We use $A\lesssim B$ or $B\gtrsim A$ to denote the statement that $A\leq CB$ for some absolute constant $C>0$ which may
vary from line to line but is independent of $\tau$ or $n$, and we denote $A\sim B$ for
$A\lesssim B\lesssim A$. We use $O(Y)$ to denote any quantity $X$ such that $X\lesssim Y$. Moreover, we denote $\langle \cdot,\cdot\rangle$ to be the $L^2$-inner product, that is
$$
\langle f,g \rangle=\mbox{Re} \int_\T f(x) \overline{g(x)}\,dx.
$$

The Fourier transform of a function $f$ on $\T$ is defined by
$$
\hat{f}_k=\frac{1}{2\pi}\int_{\T} e^{- i   kx }f( x )\,d x,
$$
and thus the Fourier inversion formula
$$
f( x )=\sum_{k\in \Z} e^{ i  kx } \hat{f}_k.
$$
Then the following usual properties of the Fourier transform hold:
\begin{eqnarray*}
 &\|f\|_{L^2(\T)}^2
 =2\pi \sum\limits_{k\in \Z}\big|\hat{f_k}\big|^2\quad \mbox{(Plancherel)}; \\
 & \widehat{(fg)}( k )=\sum\limits_{k_1\in\Z}
  \hat{f}_{k - k _1}\hat{g}_{k _1}  \quad \mbox{(Convolution)}.
\end{eqnarray*}
The Sobolev space $H^s(\T)$ for $s\geq0$ has the equivalent norm,
$$
\big\|f\big\|_{H^s(\T)}^2=
\big\|J^sf\big\|_{L^2(\T)}^2
=2\pi\sum_{k\in \Z}(1+ k ^2)^s |\hat{f}_k|^2,
$$
where we denote the operator
$$J^s=(1-\partial_{xx})^\frac s2.$$
Moreover, we denote $\partial_x^{-1}$ to be the operator defined by
\begin{equation}\label{def:px-1}
\widehat{\partial_x^{-1}f}( k )
=\Bigg\{ \aligned
    &(i k )^{-1}\hat{f}_k,\quad &\mbox{when }  k \ne 0,\\
    &0,\quad &\mbox{when }  k = 0.
   \endaligned
\end{equation}

We denote $T_m(M; v)$ to be a class of  qualities which is defined in the Fourier space by
\begin{align}\label{def:TmM}
\mathscr{F}T_m(M; v)(k)=O\bigg(\sup\limits_{t\in [0,T]}
\sum\limits_{ k = k _1+\cdots+ k _m}
| M( k, k _1,\cdots, k _m)|\>|\hat{v}_{ k _1}(t)| \cdots |\hat v_{ k _m}(t)|\bigg).
\end{align}
Here we regards $\bar v$ and $v$ as the same since there is no influence in the whole of analysis.
\subsection{Some preliminary estimates}\label{subsec3}
First, we will frequently apply the following Kato-Ponce inequality (simple version), which was originally proved in \cite{Kato-Ponce} and an important progress in the endpoint case was made in \cite{BoLi-KatoPonce, Li-KatoPonce} very recently.
\begin{lemma}\label{lem:kato-Ponce}(Kato-Ponce inequality) The following inequalities hold:
\begin{itemize}
  \item[(i)]
  For any $ \gamma>\frac 12$, $f,g\in H^{\gamma}$, then
\begin{align*}
\|J^\gamma (fg)\|_{L^2}\lesssim \|f\|_{H^\gamma}\|g\|_{H^{\gamma}}.
\end{align*}
  \item[(ii)]
For any  $\gamma\ge 0, \gamma_1>\frac 12$, $f\in H^{\gamma+\gamma_1},g\in H^{\gamma}$, then
\begin{align*}
\|J^\gamma (fg)\|_{L^2}\lesssim \|f\|_{H^{\gamma+\gamma_1}}\|g\|_{H^{\gamma}}.
\end{align*}
\end{itemize}
\end{lemma}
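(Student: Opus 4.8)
The plan is to prove both estimates by passing to Fourier coefficients and reducing each to Young's inequality for convolutions of sequences, with the threshold $\tfrac12$ entering through the embedding of a weighted $\ell^2$ into $\ell^1$. By Plancherel's identity and the convolution formula,
\begin{align*}
\|J^\gamma (fg)\|_{L^2}^2 = 2\pi\sum_{k\in\Z}(1+k^2)^\gamma\Big|\sum_{k_1+k_2=k}\hat f_{k_1}\hat g_{k_2}\Big|^2,
\end{align*}
so, writing $\langle k\rangle:=(1+k^2)^{1/2}$, it suffices to bound the $\ell^2_k$-norm of $\langle k\rangle^\gamma\sum_{k_1+k_2=k}|\hat f_{k_1}|\,|\hat g_{k_2}|$. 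The first step is the elementary discrete Leibniz inequality
\begin{align*}
\langle k_1+k_2\rangle^\gamma\lesssim \langle k_1\rangle^\gamma+\langle k_2\rangle^\gamma,\qquad \gamma\ge 0,
\end{align*}
which follows by using $\langle k_1+k_2\rangle\le 2\langle k_j\rangle$ for the index $j$ with $|k_j|=\max(|k_1|,|k_2|)$ and raising to the power $\gamma$. This splits the weighted sum into a piece carrying the weight on $f$ and a piece carrying it on $g$.

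For part (i) the two pieces are handled symmetrically. Consider $\sum_{k_1+k_2=k}\langle k_1\rangle^\gamma|\hat f_{k_1}|\,|\hat g_{k_2}|$, the convolution of the sequences $a_{k_1}=\langle k_1\rangle^\gamma|\hat f_{k_1}|$ and $b_{k_2}=|\hat g_{k_2}|$. By Young's inequality $\|a\ast b\|_{\ell^2}\le\|a\|_{\ell^2}\|b\|_{\ell^1}$, the $\ell^2_k$-norm is at most $\|a\|_{\ell^2}\|b\|_{\ell^1}\lesssim\|f\|_{H^\gamma}\,\|\hat g\|_{\ell^1}$. The remaining point is to absorb $\|\hat g\|_{\ell^1}$: by Cauchy--Schwarz,
\begin{align*}
\|\hat g\|_{\ell^1}=\sum_{k_2}\langle k_2\rangle^{-\gamma}\,\langle k_2\rangle^{\gamma}|\hat g_{k_2}|\le\Big(\sum_{k_2}\langle k_2\rangle^{-2\gamma}\Big)^{1/2}\|g\|_{H^\gamma}\lesssim\|g\|_{H^\gamma},
\end{align*}
where the series $\sum_{k_2}\langle k_2\rangle^{-2\gamma}$ converges precisely because $\gamma>\tfrac12$. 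The symmetric piece gives $\|\hat f\|_{\ell^1}\|g\|_{H^\gamma}\lesssim\|f\|_{H^\gamma}\|g\|_{H^\gamma}$ in the same way, and assembling the two pieces by the triangle inequality in $\ell^2_k$ proves (i).

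For part (ii) the same splitting is used, but now only $f$ carries the spare regularity $\gamma_1>\tfrac12$, so the two pieces must be handled by Young's inequality in \emph{opposite} directions. In the piece where the weight lands on $g$, namely $\sum_{k_1+k_2=k}\langle k_2\rangle^\gamma|\hat f_{k_1}|\,|\hat g_{k_2}|$, I would use $\|a\ast b\|_{\ell^2}\le\|a\|_{\ell^1}\|b\|_{\ell^2}$ with $a_{k_1}=|\hat f_{k_1}|$ and $b_{k_2}=\langle k_2\rangle^\gamma|\hat g_{k_2}|$, together with $\|\hat f\|_{\ell^1}\lesssim\|f\|_{H^{\gamma_1}}\le\|f\|_{H^{\gamma+\gamma_1}}$ (Cauchy--Schwarz, using $\gamma_1>\tfrac12$), yielding the bound $\|f\|_{H^{\gamma+\gamma_1}}\|g\|_{H^\gamma}$. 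In the piece where the weight lands on $f$, namely $\sum_{k_1+k_2=k}\langle k_1\rangle^\gamma|\hat f_{k_1}|\,|\hat g_{k_2}|$, I would instead use $\|a\ast b\|_{\ell^2}\le\|a\|_{\ell^1}\|b\|_{\ell^2}$ with $a_{k_1}=\langle k_1\rangle^\gamma|\hat f_{k_1}|$ and $b_{k_2}=|\hat g_{k_2}|$; here $\|b\|_{\ell^2}=\|\hat g\|_{\ell^2}\sim\|g\|_{L^2}\le\|g\|_{H^\gamma}$, while
\begin{align*}
\|a\|_{\ell^1}=\sum_{k_1}\langle k_1\rangle^{-\gamma_1}\,\langle k_1\rangle^{\gamma+\gamma_1}|\hat f_{k_1}|\le\Big(\sum_{k_1}\langle k_1\rangle^{-2\gamma_1}\Big)^{1/2}\|f\|_{H^{\gamma+\gamma_1}}\lesssim\|f\|_{H^{\gamma+\gamma_1}},
\end{align*}
again using $\gamma_1>\tfrac12$ for convergence. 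Summing the two pieces gives (ii).

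The only genuinely delicate point is the allocation of regularity, which is exactly where the strict inequalities enter: the embedding of a weighted Sobolev sequence into $\ell^1$ costs strictly more than half a derivative, so in (i) I exploit that each factor already has $\gamma>\tfrac12$ to spare, whereas in (ii) I must always route the $\ell^1$ summation onto $f$ and choose the direction of Young's inequality accordingly, so that the unavoidable $\ell^1$ cost is paid out of the $\gamma_1$ budget of $f$ rather than out of the $H^\gamma$ norm of $g$. Once this bookkeeping is fixed, both estimates follow by the triangle inequality in $\ell^2_k$.
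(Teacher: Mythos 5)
Your proof is correct. Note, however, that the paper does not prove Lemma \ref{lem:kato-Ponce} at all: it is stated as a known ``simple version'' of the Kato--Ponce inequality and justified purely by citation to Kato--Ponce and to the endpoint refinements of Bourgain--Li and D.~Li, whose original arguments live in the continuous setting of $\R^n$ and rest on genuine harmonic-analysis machinery (commutator estimates, Littlewood--Paley decompositions) because they address the full fractional Leibniz rule in general $L^p$ spaces. What you supply instead is a self-contained, elementary periodic proof: Plancherel, the discrete Leibniz splitting $\langle k_1+k_2\rangle^\gamma\lesssim\langle k_1\rangle^\gamma+\langle k_2\rangle^\gamma$, Young's convolution inequality on $\ell^2(\Z)$, and the Cauchy--Schwarz embedding $\|\hat f\|_{\ell^1}\lesssim\|f\|_{H^{\sigma}}$ for $\sigma>\frac12$, with the regularity bookkeeping in (ii) correctly routing the $\ell^1$ cost onto the factor $f$ that carries the spare $\gamma_1$ derivatives. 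This is exactly the right level of technology for the $L^2$-based torus statement the paper actually uses, and it buys transparency about where the thresholds $\gamma>\frac12$ and $\gamma_1>\frac12$ enter, at the price of not generalizing to the $L^p$ or endpoint cases covered by the cited references. One cosmetic inaccuracy: in part (ii) you announce that the two pieces are handled by Young's inequality ``in opposite directions,'' but your displayed estimates (correctly) use the same direction $\|a\ast b\|_{\ell^2}\le\|a\|_{\ell^1}\|b\|_{\ell^2}$ in both pieces, placing the $\ell^1$ norm on the (weighted or unweighted) coefficients of $f$ each time --- consistent with your closing remark that the $\ell^1$ summation must always be paid out of the $\gamma_1$ budget of $f$; the phrase should be fixed, but the mathematics is sound.
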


To prove our main result below, we need  the following two specific estimates.
\begin{lemma}\label{lem:An}
The following inequalities hold:
\begin{itemize}
\item[(i)]
Let $\gamma>\frac{3}{2}$, and $v\in {L^\infty((0,T);H^{\gamma})}$, then
$$
\big\|T_3(k^{-1}k_1k_2k_3;v)\big\|_{H^{\gamma}}
\lesssim \|v\|_{L^\infty((0,T);H^{\gamma})}^3.
$$
  \item[(ii)]
  Let $\gamma\geq1$, and $v\in {L^\infty((0,T);H^{\gamma})}$, then
  $$
 \big\|  T_3(k^{-1}k_1k_2^{\frac12-}k_3^{\frac12-};v)\big\|_{H^{\gamma}}
\lesssim \|v\|_{L^\infty((0,T);H^{\gamma})}^3.
  $$
 \item[(iii)]
  Let $\gamma\geq1$, and $v\in {L^\infty((0,T);H^{\gamma})}$, then
  $$
 \big|\mathscr{F}T_3(k_2k_3;v)(0)\big|
\lesssim \|v\|_{L^\infty((0,T);H^{\gamma})}^3.
  $$
 \item[(iv)]
Let $\gamma>\frac{1}{2}$, $m\geq1$ and $v\in {L^\infty((0,T);H^{\gamma})}$, then
$$
\big\|T_m(1;v)\big\|_{H^{\gamma}}
\lesssim \|v\|_{L^\infty((0,T);H^{\gamma})}^m.
$$
\end{itemize}
\end{lemma}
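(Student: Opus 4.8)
The plan is to reduce every estimate, via Plancherel, to an $\ell^2$ bound on a multilinear convolution of nonnegative sequences. It suffices to prove each inequality at a fixed time $t$ with a constant independent of $t$ and then take $\sup_{t\in[0,T]}$, so I suppress $t$ and set $a_k=\langle k\rangle^{\gamma}|\hat v_k|$, where $\langle k\rangle=(1+k^2)^{1/2}$, so that $\|a\|_{\ell^2}\sim\|v\|_{H^\gamma}$. By \eqref{def:TmM} the coefficients of $T_m(M;v)$ are dominated by $\sum_{k=k_1+\cdots+k_m}|M|\prod_j|\hat v_{k_j}|$, hence
$$
\|T_m(M;v)\|_{H^\gamma}\lesssim\Big\|\sum_{k=k_1+\cdots+k_m}\widetilde M\,a_{k_1}\cdots a_{k_m}\Big\|_{\ell^2_k},\qquad \widetilde M=\frac{\langle k\rangle^{\gamma}|M|}{\langle k_1\rangle^{\gamma}\cdots\langle k_m\rangle^{\gamma}}.
$$
The whole game is to bound the normalized symbol $\widetilde M$ by a product of per-frequency weights, one of which (carried by the largest input) is $\lesssim1$ and the rest of which lie in $\ell^2(\Z)$; the sum is then closed by Young's inequality $\|f*g*h\|_{\ell^2}\le\|f\|_{\ell^2}\|g\|_{\ell^1}\|h\|_{\ell^1}$ together with the elementary fact that $\langle\cdot\rangle^{-s}\in\ell^2(\Z)$ for $s>\tfrac12$, so that $\langle\cdot\rangle^{-s}a\in\ell^1$. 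The basic mechanism for redistributing the output weight is $\langle k\rangle\lesssim\max_j\langle k_j\rangle$, valid because $k=\sum_j k_j$; I also note that on $\Z$ one has $|k|\ge1$ whenever $k\ne0$, so $\langle k\rangle^\gamma/|k|\lesssim\langle k\rangle^{\gamma-1}$ and the factor $k^{-1}$ is a genuine one-derivative gain with no small-denominator issue.

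Part (iv) is immediate: with $M\equiv1$ the coefficients of $T_m(1;v)$ are dominated by those of $w^m$, where $\hat w_k=|\hat v_k|$, so $\|T_m(1;v)\|_{H^\gamma}\le\|w^m\|_{H^\gamma}\lesssim\|w\|_{H^\gamma}^m=\|v\|_{H^\gamma}^m$ by iterating the algebra property of Lemma~\ref{lem:kato-Ponce}(i), which holds since $\gamma>\tfrac12$. For part (i) one has $\widetilde M\lesssim\langle k\rangle^{\gamma-1}\prod_{j=1}^{3}\langle k_j\rangle^{1-\gamma}$. Assigning the factor $\langle k\rangle^{\gamma-1}$ to the largest frequency, say $\langle k_1\rangle$, gives $\langle k\rangle^{\gamma-1}\langle k_1\rangle^{1-\gamma}\lesssim1$ and leaves $\langle k_2\rangle^{1-\gamma}\langle k_3\rangle^{1-\gamma}$; since $\gamma>\tfrac32$ we have $1-\gamma<-\tfrac12$, so both weights are in $\ell^2$ and Young's inequality yields the cubic bound. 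Summing the three symmetric regions (according to which of $k_1,k_2,k_3$ is largest) completes the proof, and one sees that $\gamma>\tfrac32$ is exactly the summability threshold.

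Part (ii) is the same scheme with an asymmetric symbol: writing the loss $\tfrac12-$ as $\tfrac12-\epsilon$ with small $\epsilon>0$, one has $\widetilde M\lesssim\langle k\rangle^{\gamma-1}\langle k_1\rangle^{1-\gamma}\langle k_2\rangle^{\frac12-\epsilon-\gamma}\langle k_3\rangle^{\frac12-\epsilon-\gamma}$, and I would split into cases by the largest frequency. If $k_1$ is largest, the surviving weights $\langle k_2\rangle^{\frac12-\epsilon-\gamma}\langle k_3\rangle^{\frac12-\epsilon-\gamma}$ have exponent $\le-\tfrac12-\epsilon$ for $\gamma\ge1$, hence lie in $\ell^2$; if $k_2$ (or $k_3$) is largest, one absorbs $\langle k\rangle^{\gamma-1}$ into $\langle k_2\rangle^{\frac12-\epsilon-\gamma}$ to obtain $\langle k_2\rangle^{-\frac12-\epsilon}$, while $\langle k_1\rangle^{1-\gamma}\le1$ and $\langle k_3\rangle^{\frac12-\epsilon-\gamma}\le\langle k_3\rangle^{-\frac12-\epsilon}$ are harmless. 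In every case two $\ell^2$ weights remain and Young's inequality applies; the loss $\tfrac12-$ is precisely what keeps the leftover exponents strictly below $-\tfrac12$. For part (iii) we are on the resonant set $k_1+k_2+k_3=0$ with symbol $|k_2k_3|$, and for $\gamma\ge1$ the crude bound $|k_2k_3|/\prod_j\langle k_j\rangle^{\gamma}\le\langle k_1\rangle^{-1}$ holds. Hence the sum is $\le\sum_{k_1}\langle k_1\rangle^{-1}a_{k_1}\,(a*a)_{-k_1}$; bounding $(a*a)_{-k_1}\le\|a\|_{\ell^2}^2$ pointwise by Cauchy--Schwarz and then $\sum_{k_1}\langle k_1\rangle^{-1}a_{k_1}\le\|\langle\cdot\rangle^{-1}\|_{\ell^2}\|a\|_{\ell^2}$, using $\langle\cdot\rangle^{-1}\in\ell^2(\Z)$, gives the desired $\|a\|_{\ell^2}^3$.

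The individual convolution estimates are routine; the delicate part, and what I expect to be the main obstacle, is the derivative bookkeeping needed to verify that the regularity thresholds are met in every frequency arrangement --- specifically that $\gamma>\tfrac32$ in (i) and $\gamma\ge1$ together with the $\tfrac12-$ loss in (ii) leave, in each case, two weights that are genuinely $\ell^2$-summable rather than merely borderline. Once the output weight $\langle k\rangle^{\gamma}$ has been transferred onto the largest input frequency, each estimate collapses to Young's inequality plus the summability of $\langle\cdot\rangle^{-s}$ for $s>\tfrac12$, so the only real work lies in the case analysis and in checking that the endpoints are strictly avoided.
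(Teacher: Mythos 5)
Your proof is correct, and all four regularity thresholds come out exactly as in the statement, but for parts (i)--(iii) you take a genuinely more elementary route than the paper. The first step is shared: both you and the paper use Plancherel to reduce to a weighted convolution of the nonnegative sequences $|\hat v_{k_j}|$ (the paper likewise assumes $\hat v_{k_j}\ge 0$ without loss of generality, which is also what legitimizes your restriction to the three ``largest frequency'' regions and summing them). The routes then diverge: the paper recognizes the convolution as the Fourier side of a product of functions --- $(|\nabla| v)^3$ in (i), $(|\nabla| v)(|\nabla|^{\frac12-}v)^2$ in (ii), measured in $H^{\gamma-1}$ --- and closes by citing the Kato--Ponce inequality, Lemma \ref{lem:kato-Ponce} (i) and (ii) respectively, while (iii) is handled by the direct bound $\sup_t\int_\T v\,(|\nabla|v)^2\,dx\lesssim \|v\|_{L^\infty((0,T);H^\gamma)}^3$. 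You instead normalize the symbol, transfer the output weight $\langle k\rangle^{\gamma-1}$ onto the largest input frequency (legitimate since $\gamma-1\ge 0$ in every case, so $\langle k\rangle\lesssim\max_j\langle k_j\rangle$ may be raised to that power), and close with Cauchy--Schwarz plus Young's inequality $\ell^2*\ell^1*\ell^1\to\ell^2$; in effect you reprove by hand precisely the trilinear product estimates the paper imports as a black box, and your (iii) via $(a*a)_{-k_1}\le\|a\|_{\ell^2}^2$ and $\langle\cdot\rangle^{-1}\in\ell^2(\Z)$ is the exact $\ell^2$-side analogue of the paper's H\"older argument. What your version buys is self-containedness and explicit threshold bookkeeping --- one sees directly that $1-\gamma<-\frac12$ forces $\gamma>\frac32$ in (i), and that the $\frac12-$ loss is what keeps the leftover exponents at $-\frac12-\epsilon$ for $\gamma\ge1$ in (ii) --- whereas the paper's version buys brevity; in (iv) the two proofs coincide, both iterating the algebra property of Lemma \ref{lem:kato-Ponce} (i).
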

\begin{proof}
We assume that $\hat{v}_{k_j}(t), j=1,\cdots,m$ are positive for any $t\in [0,T]$, otherwise one may replace them by $|\hat{v}_{k_j}(t)|$.

(i)
Using the definition of $T_m(M)$ in \eqref{def:TmM},
 we have
\begin{align*}
|\mathscr{F}T_3(k^{-1}k_1k_2k_3;v)(k)|
\lesssim\sup\limits_{t\in [0,T]}
\sum\limits_{ k = k _1+ k _2+ k _3\ne 0} |k|^{-1}|k_1||k_2||k_3|
\>\hat{v}_{ k _1}(t) \hat{v}_{ k _2}(t)  \hat v_{ k _3}(t).
\end{align*}
By Plancherel's identity,  we get
\begin{align*}
\big\|T_3(k^{-1}k_1k_2k_3;v)\big\|_{H^{\gamma}}
&\lesssim
\Big\|\sum\limits_{ k = k _1+ k _2+ k _3\neq0}|k|^{\gamma-1}|k_1||k_2||k_3|
\>\hat{v}_{ k _1}(t) \hat{v}_{ k _2}(t)  \hat v_{ k _3}(t)\Big\|_{L^\infty((0,T);L^2)}\\
 & \lesssim \big\|(|\nabla| v)^3\big\|_{L^\infty((0,T);H^{\gamma-1})}.
\end{align*}
Therefore, by Lemma \ref{lem:kato-Ponce} (i), we obtain that for any $\gamma>\frac32$,
\begin{align*}
\big\|T_3(k^{-1}k_1k_2k_3;v)\big\|_{H^{\gamma}}
\lesssim \big\||\nabla| v\big\|_{L^\infty((0,T);H^{\gamma-1})}^3
\lesssim \|v\|_{L^\infty((0,T);H^{\gamma})}^3.
\end{align*}
(ii)
By the same argument to the proof of (i), we have
\begin{align*}
\big\|T_3(k^{-1}k_1k_2^{\frac12-}k_3^{\frac12-};v)\big\|_{H^{\gamma}}
&\lesssim
\Big\|\sum\limits_{ k = k _1+ k _2+ k _3\neq0}|k|^{\gamma-1}|k_1||k_2|^{\frac12-}|k_3|^{\frac12-}
\>\hat{v}_{ k _1}(t) \hat{v}_{ k _2}(t)  \hat v_{ k _3}(t)\Big\|_{L^\infty((0,T);L^2)}\\
 & \lesssim \big\|(|\nabla| v)(|\nabla|^{\frac12-}v)(|\nabla|^{\frac12-}v)
    \big\|_{L^\infty((0,T);H^{\gamma-1})}.
\end{align*}
Therefore, by Lemma \ref{lem:kato-Ponce} (ii), we obtain that for any $\gamma\geq1$,
\begin{align*}
\big\|T_3(k^{-1}k_1k_2^{\frac12-}k_3^{\frac12-};v)\big\|_{H^{\gamma}}
\lesssim  \big\|(|\nabla| v)(|\nabla|^{\frac12-}v)(|\nabla|^{\frac12-}v)
    \big\|_{L^\infty((0,T);H^{\gamma-1})}
\lesssim \|v\|_{L^\infty((0,T);H^{\gamma})}^3.
\end{align*}
(iii) From the definition of $T_m(M)$, we have  that for any $\gamma\ge 1$, 
\begin{align*}
|\mathscr{F}T_3(k_2k_3;v)(0)|
\lesssim &\sup\limits_{t\in [0,T]}
\sum\limits_{ 0 = k _1+\cdots+ k _m}
\>\hat{v}_{ k _1}(t) \>|k_2|\hat{v}_{ k _2}(t) \>|k_3|\hat{v}_{ k _3}(t) \\
\lesssim &\sup\limits_{t\in [0,T]}
\int_\T v \left(|\nabla |v\right)^2\,dx 
\lesssim \|v\|_{L^\infty((0,T);H^{\gamma})}^3.
\end{align*}
(iv)
Similarly, we have
\begin{align*}
|\mathscr{F}T_m(1;v)|\lesssim\sup\limits_{t\in [0,T]}
\sum\limits_{ k = k _1+\cdots+ k _m}
\>\hat{v}_{ k _1}(t) \cdots \hat v_{ k _m}(t).
\end{align*}
By Plancherel's identity and Lemma \ref{lem:kato-Ponce} (i), we obtain that for any $\gamma>\frac12$,
\begin{align*}
\big\|T_m(1;v)\big\|_{H^{\gamma}}
  \lesssim \|v^m\|_{L^\infty((0,T);H^{\gamma})}
\lesssim \|v\|_{L^\infty((0,T);H^{\gamma})}^m.
\end{align*}
Hence we get the desired result.
\end{proof}


\section{The first order scheme}

By Duhamel formula, we write
\begin{align*}
u(t_{n+1})=\fe^{i\tau\partial_x^2}u(t_n)
 -i\int_0^\tau \fe^{i\left(t_{n+1}-(t_n+s)\right)\partial_x^2}
   \big[|u(t_n+s)|^2u(t_n+s)\big]\,ds.
\end{align*}
Let $v(t):=\fe^{-it\partial_x^2}u(t)$, then
\begin{align}\label{solution}
v(t_{n+1})=v(t_n)-i\int_0^\tau \fe^{-i(t_n+s)\partial_x^2}
    \big[|\fe^{i(t_n+s)\partial_x^2}v(t_n+s)|^2\,\fe^{i(t_n+s)\partial_x^2}v(t_n+s)\big]\,ds.
\end{align}
Taking Fourier transform, we get
\begin{align*}
\hat v_{ k }(t_{n+1})
  =\hat v_{ k }(t_n)
     -i\int_0^\tau\sum\limits_{ k = k _1+ k _2+ k _3}\fe^{i(t_n+s)\phi}
        \>\widehat{ \bar v}_{ k _1}(t_n+s)\hat v_{ k _2}(t_n+s)\hat v_{ k _3}(t_n+s)\,ds.
\end{align*}
Here we denote $\hat v_{ k }(t)$ to be the $k$-th Fourier coefficients of $v(t)$, and the phase function
\begin{align*}
\phi=\phi( k, k _1, k _2, k _3)= k ^2+ k _1^2- k _2^2- k _3^2.
\end{align*}
By \eqref{solution}, we find that for any $s\in [0,\tau]$,
\begin{align}
v(t_n+s)=v(t_n)+\tau T_3(1;v).
\end{align}
Hence,  we have that
\begin{align}\label{v-1st-phi}
\hat v_{ k }(t_{n+1})=\hat v_{ k }(t_n)
    -i\sum\limits_{ k = k _1+ k _2+ k _3}\int_0^\tau \fe^{i(t_n+s)\phi}\,ds
       \>\widehat{ \bar v}_{ k _1}\hat v_{ k _2}\hat v_{ k _3}+\tau^2 \>\mathscr{F}{T_5(1;v)}(k).
\end{align}
Here and below, we denote $\hat v_{ k }$ to be $\hat v_{ k }(t_n)$ for short.

Now we split into the following two cases.

Case 1, $k=0$. Then by \eqref{v-1st-phi}, we get
\begin{align}\label{zeromode-1}
\hat v_0(t_{n+1})=&\hat v_0(t_n)-i\sum\limits_{k_1+k_2+k_3=0}\int_0^\tau \fe^{i(t_n+s)(k_1^2-k_2^2-k_3^2)}\,ds\>\widehat{ \bar v}_{k_1}\hat v_{k_2}\hat v_{k_3}
+\tau^2 \>\mathscr{F}T_5(1;v)(0).
\end{align}
Note that under the condition of $k_1+k_2+k_3=0$, we can transform the phase function
\begin{align*}
  k_1^2-k_2^2-k_3^2=2k_2 k_3.
\end{align*}
Therefore, we have
\begin{align}\label{v-1-2}
\int_0^\tau \left(\fe^{is(k_1^2-k_2^2-k_3^2)}-1\right)\,ds=\tau^2 O(k_2k_3).
\end{align}
According to \eqref{v-1-2}, we can freeze the phase function in the integrand in \eqref{zeromode-1} and obtain that 
\begin{align}\label{est:zeromode-1}
\hat v_0(t_{n+1})=&
       \hat v_0(t_n)-i\tau\sum\limits_{k_1+k_2+k_3=0} \fe^{it_n(k_1^2-k_2^2-k_3^2)}\>\widehat{ \bar v}_{k_1}\hat v_{k_2}\hat v_{k_3}
+\tau^2 \big(\mathscr{F}T_3(k_2k_3;v)(0)+\mathscr{F}T_5(1;v)(0)\big)\notag\\
=&
       \hat v_0(t_n)-i\tau \Pi_0\left(\left|\fe^{it_n \partial_{x}^2}v(t_n)\right|^2\fe^{it_n \partial_{x}^2}v(t_n)\right)
+\tau^2 \big(\mathscr{F}T_3(k_2k_3;v)(0)+\mathscr{F}T_5(1;v)(0)\big).
\end{align}


Case 2, $k\neq0$. For \eqref{v-1st-phi}, we only consider the term
\begin{equation}\label{v-2-1}
    -i\sum\limits_{k=k_1+k_2+k_3}\int_0^\tau \fe^{i(t_n+s)\phi}\,ds
       \>\widehat{ \bar v}_{k_1}\hat v_{k_2}\hat v_{k_3}.
\end{equation}
Note that
\begin{align*}
   1=\frac{(k_1+k_2)+(k_1+k_3)-k_1}{k}.
\end{align*}
Then by symmetry, it allows us to split \eqref{v-2-1} into two parts:
\begin{subequations}
\begin{align}
   &-2i\sum\limits_{k=k_1+k_2+k_3} \int_0^\tau\frac{k_1+k_2}{k} \fe^{i(t_n+s)\phi}\,ds
     \>\widehat{ \bar v}_{k_1}\hat v_{k_2}\hat v_{k_3}\label{v-2-2}\\
    &  +i\sum\limits_{k=k_1+k_2+k_3} \int_0^\tau\frac{k_1}{k} \fe^{i(t_n+s)\phi}\,ds
        \>\widehat{ \bar v}_{k_1}\hat v_{k_2}\hat v_{k_3}\label{v-2-3}.
\end{align}
\end{subequations}

For  \eqref{v-2-2}, we need the following equality:
If $k=k_1+k_2+k_3$, then
$$\phi=2(k_1+k_2)(k_1+k_3).$$
We note that if $k_1+k_3\neq0$, then
\begin{align}
\int_0^\tau\frac{k_1+k_2}{k} \fe^{i(t_n+s)\phi}\,ds
  =\frac{1}{2ik(k_1+k_3)}
     \big(\fe^{it_{n+1}\phi}-\fe^{it_n\phi}\big);
\end{align}
if $k_1+k_3=0$, then $\phi=0, k=k_2$ and thus
\begin{align}
\int_0^\tau\frac{k_1+k_2}{k} \fe^{i(t_n+s)\phi}\,ds
  =\tau\Big(\frac{k_1}{k}+1\Big).
\end{align}
Therefore,  we get
\begin{align*}
\eqref{v-2-2}
   =&-\sum\limits_{{\substack{k=k_1+k_2+k_3\\   k_1+k_3\ne 0}}} \frac{1}{k(k_1+k_3)}
     \big(\fe^{it_{n+1}\phi}-\fe^{it_n\phi}\big)
         \>\widehat{ \bar v}_{k_1}\hat v_{k_2}\hat v_{k_3}\\
   &\quad -2i\tau\sum\limits_{k_1+k_3= 0} \Big(\frac{k_1}{k}+1\Big)
         \>\widehat{ \bar v}_{k_1}\hat v_{k}\hat v_{k_3}.
\end{align*}
Now we need the following momentum conservation law:
\begin{align}
P\big(u(t)\big)&=\frac1{2\pi}\int_{\T} u(t)\bar u_ x (t)\,d x =P_0.\label{P-cons}
\end{align}
Note that by \eqref{mass} and \eqref{P-cons}, we have that
\begin{align*}
 -2i\tau\sum\limits_{k_1+k_3= 0} \Big(\frac{k_1}{k}+1\Big)
         \>\widehat{ \bar v}_{k_1}\hat v_{k}\hat v_{k_3}
      =&-\frac{i\tau}{\pi}  \int_\T \partial_x \bar u(t_n)\> u(t_n)\,dx\> (ik)^{-1}\hat v_{k}
         -\frac{i\tau}{\pi} \int_\T |u(t_n)|^2\,dx\> \hat v_{k} \\
      =&-2i \tau P_0 \>  (ik)^{-1}\hat v_{k}  -2i\tau M_0  \>\hat v_{k}.
\end{align*}
Therefore, we further obtain
\begin{align}\label{est:v-2-2}
\eqref{v-2-2}
   =&-\sum\limits_{{\substack{k=k_1+k_2+k_3\\   k_1+k_3\ne 0}}} \frac{1}{k(k_1+k_3)}
     \big(\fe^{it_{n+1}\phi}-\fe^{it_n\phi}\big)
         \>\widehat{ \bar v}_{k_1}\hat v_{k_2}\hat v_{k_3}
    -2i\tau P_0   \> (ik)^{-1}\hat v_{k} -2i\tau M_0  \>\hat v_{k}.
\end{align}

For \eqref{v-2-3},
we note that it can not be integrated in the physical space exactly.
Now we need the following two equalities:
If $k=k_1+k_2+k_3$, then
\begin{align}
&\phi(k,k_1,k_2,k_3)=2kk_1+2k_2 k_3;\label{phase-1}\\
&2kk_1=k^2+k_1^2-(k_2+k_3)^2.\label{phase-2}
\end{align}
Putting \eqref{phase-1} into \eqref{v-2-3}, we decompose \eqref{v-2-3} into two subparts again:
\begin{subequations}\label{v-2-45}
\renewcommand{\theequation}
{\theparentequation-\arabic{equation}}
\begin{align}
    &i\sum\limits_{k=k_1+k_2+k_3} \int_0^\tau \frac{k_1}{k}
      \fe^{it_n\phi} \big(\fe^{2isk_2k_3}-1\big)\fe^{2iskk_1}\,ds
           \>\widehat{ \bar v}_{k_1}\hat v_{k_2}\hat v_{k_3}\label{v-2-4}\\
    &+ i\sum\limits_{k=k_1+k_2+k_3} \int_0^\tau \frac{k_1}{k}
         \fe^{it_n\phi} \fe^{2iskk_1}\,ds
           \>\widehat{ \bar v}_{k_1}\hat v_{k_2}\hat v_{k_3}\label{v-2-5}.
\end{align}
\end{subequations}
For \eqref{v-2-4}, applying the inequality
$$
\big|\fe^{2isk_2k_3}-1\big|\le 2\tau |k_2||k_3|,\quad \mbox{ for any } s\in [0,\tau],
$$
we have that
 \begin{equation}\label{equal}
 \eqref{v-2-4}= \tau^2 \mathscr{F}T_3(k^{-1}k_1k_2k_3;v)(k).
 \end{equation}

For \eqref{v-2-5}, from \eqref{phase-2}
we have
$$
 \frac{k_1}{k}\int_0^\tau\fe^{2iskk_1}\,ds= \frac{1}{2ik^2}\left(\fe^{i\tau \big(k^2+k_1^2-(k_2+k_3)^2\big)}-1\right),
$$
we get
\begin{align*}
   \eqref{v-2-5}
       =\sum\limits_{k=k_1+k_2+k_3}  \frac{1}{2k^2}
              \fe^{it_n\phi}\left(\fe^{i\tau \big(k^2+k_1^2-(k_2+k_3)^2\big)}-1\right)
               \>\widehat{ \bar v}_{k_1}\hat v_{k_2}\hat v_{k_3}.
\end{align*}
Therefore, collecting the two finding in \eqref{v-2-45}, we obtain
\begin{align}\label{est:v-2-3}
\eqref{v-2-3}=\sum\limits_{k=k_1+k_2+k_3}  \frac{1}{2k^2}
              \fe^{it_n\phi}\left(\fe^{i\tau \big(k^2+k_1^2-(k_2+k_3)^2\big)}-1\right)
               \>\widehat{ \bar v}_{k_1}\hat v_{k_2}\hat v_{k_3}+ \tau^2 \mathscr{F}T_3(k^{-1}k_1k_2k_3;v)(k).
\end{align}

Combining with \eqref{est:v-2-2} and \eqref{est:v-2-3}, we derive that when $k\ne 0$,
\begin{align}\label{est:v-n0}
\hat v_k(t_{n+1})=&\hat v_k(t_n)
    -\sum\limits_{{\substack{k=k_1+k_2+k_3\\   k_1+k_3\ne 0}}} \frac{1}{k(k_1+k_3)}
     \big(\fe^{it_{n+1}\phi}-\fe^{it_n\phi}\big)
         \>\widehat{ \bar v}_{k_1}\hat v_{k_2}\hat v_{k_3}\notag\\
        &
           -2i\tau P_0\>  (ik)^{-1}\hat v_{k}  -2i\tau M_0\>\hat v_{k}
        \notag\\
   &
         +  \sum\limits_{k=k_1+k_2+k_3}  \frac{1}{2k^2}
              \fe^{it_n\phi} \left(\fe^{i\tau \big(k^2+k_1^2-(k_2+k_3)^2\big)}-1\right)
               \>\widehat{ \bar v}_{k_1}\hat v_{k_2}\hat v_{k_3}\notag\\
        & +\tau^2 \Big(\mathscr{F}T_3(k^{-1}k_1k_2k_3;v)+\mathscr{F}T_5(1;v)\Big)(k).
\end{align}

Now together with \eqref{est:zeromode-1} and \eqref{est:v-n0}, and using the formula 
$$
f-2i\tau M_0  f-2i\tau P_0\>\partial_x^{-1} f=\fe^{-2i\tau M_0-2i\tau P_0\partial_x^{-1}}f+\tau^2T_1(1;f),
$$
and  the inverse Fourier transform, we get
\begin{align}\label{vtn+1-app}
v(t_{n+1})=
&\Phi^n\big(v(t_n)\big)+\tau^2 \Big(T_1(1;v)+\mathscr{F}T_3(k_2k_3;v)(0)+T_3(k^{-1}k_1k_2k_3;v)+T_5(1;v)\Big),
\end{align}
where $\Phi^n$ is defined by
\begin{align}\label{Phi-def}
\Phi^n(f)
=&\fe^{-2i\tau M_0-2i\tau P_0\partial_x^{-1}}f+2i\tau M_0  \Pi_0(f)-i\tau \Pi_0\left(\left|\fe^{it_n \partial_{x}^2}f\right|^2\fe^{it_n \partial_{x}^2}f\right)\notag\\
     &+\fe^{-it_{n+1}\partial_x^2} \partial_x^{-1}
          \Big[ \big(\fe^{it_{n+1}\partial_x^2} f\big)
              \cdot \partial_x^{-1}
                \Big(\big|\fe^{it_{n+1}\partial_x^2} f\big|^2 \Big)   \Big]
                -\fe^{-it_n\partial_x^2} \partial_x^{-1}
          \Big[ \big(\fe^{it_n\partial_x^2} f\big)
              \cdot \partial_x^{-1}
                \Big(\big|\fe^{it_n\partial_x^2}f\big|^2 \Big)   \Big]\notag\\
     &-\frac12 \fe^{-it_{n+1}\partial_x^2} \partial_x^{-2}
           \Big[\big(\fe^{-it_{n+1}\partial_x^2}\bar f\big)
              \cdot \fe^{i\tau\partial_x^2}
                 \big(\fe^{it_n\partial_x^2} f\big)^2\Big]
                 +\frac12 \fe^{-it_n\partial_x^2} \partial_x^{-2}
           \Big[\big|\fe^{it_n\partial_x^2} f\big|^2\>\fe^{it_n\partial_x^2} f\Big].
\end{align}
Accordingly, we define the numerical solution of \eqref{model} by
\begin{align}\label{NuSo-NLS}
v^{n+1}=\Phi^n\big(v^n\big), \quad n=0,1\ldots,\frac{T}{\tau}-1;\quad v^0=u_0.
\end{align}
Let $u^n:=\fe^{it_n\partial_x^2}v^n$, this gives the scheme \eqref{numerical}--\eqref{psi} and thus finishes the construction of the numerical scheme.


\section{The proofs of Theorem \ref{result1} and Corollary  \ref{corollary}}\label{proof}
\subsection{The proof of the Theorem \ref{result1}}
From \eqref{NuSo-NLS}, we have
\begin{align*}
v(t_{n+1})-v^{n+1}=&v(t_{n+1})-\Phi^n\big(v(t_n)\big)+ \Phi^n\big(v(t_n)\big)-\Phi^n\big(v^n\big)\\
          \triangleq &\mathcal{L}^n+\Phi^n\big(v(t_n)\big)-\Phi^n\big(v^n\big),
\end{align*}
where $\mathcal{L}^n=v(t_{n+1})-\Phi^n\big(v(t_n)\big)$.

Furthermore, from \eqref{vtn+1-app} we get
$$
\mathcal{L}^n=\tau^2\Big(T_1(1;v)+\mathscr{F}T_3(k_2k_3;v)(0)+T_3(k^{-1}k_1k_2k_3;v)+T_5(1;v)\Big).
$$
Then from  Lemma \ref{lem:An}, we have
\begin{align}\label{est:ln}
\|\mathcal{L}^n\|_{H^\gamma}\leq C\tau^2,
\end{align}
where the constant  $C$ depends only  on $\|u\|_{L^\infty((0,T);H^{\gamma})}$.

Note that   $\Phi^n(f)$ defined in \eqref{Phi-def} can be read as the following integral form:
\begin{align*}
\Phi^n(f)=&f-i\tau \Pi_0\left(\left|\fe^{it_n \partial_{x}^2}f\right|^2\fe^{it_n \partial_{x}^2}f\right)\\
 & -2i\int_0^\tau\fe^{-i(t_n+s)\partial_x^2}\partial_x^{-1}
          \Big( \fe^{-i(t_n+s)\partial_x^2}\partial_x\bar f
    \cdot  \big(\fe^{i(t_n+s)\partial_x^2}f\big)^2\Big)ds\\
& -2i\int_0^\tau\fe^{-i(t_n+s)\partial_x^2}\partial_x^{-1}
            \Big(\big|\fe^{i(t_n+s)\partial_x^2}f\big|^2
                \fe^{i(t_n+s)\partial_x^2}\partial_xf\Big)ds\\
& +i\int_0^\tau\fe^{-i(t_n+s)\partial_x^2}\partial_x^{-1}
          \Big( \fe^{-i(t_n+s)\partial_x^2}\partial_x\bar f
    \cdot\fe^{is\partial_x^2}  \big(\fe^{it_n\partial_x^2}f\big)^2\Big)ds\\
  &+2i\tau\big( M(f)-M(v(t_n)\big) f+2i\tau\big( P(f)-P(v(t_n)\big)\partial_x^{-1} f\\
           &+\Big[\fe^{-2i\tau\left(M_0+P_0\partial_x^{-1}\right)}-1+2i\tau\left(M_0+P_0\partial_x^{-1}\right)\Big] f.
\end{align*}
Therefore, we obtain
\begin{align*}
\Phi^n\big(v(t_n)\big)-\Phi^n\big(v^n\big)
     =&v(t_n)-v^n+\Phi^n_1+\Phi^n_2+\Phi^n_3+\Phi^n_4+\Phi^n_5,
 \end{align*}
 where
 \begin{align*}
  \Phi^n_1=& -i\tau \Pi_0\left(\left|\fe^{it_n \partial_{x}^2}v(t_n)\right|^2\fe^{it_n \partial_{x}^2}v(t_n)\right)
      +i\tau \Pi_0\left(\left|\fe^{it_n \partial_{x}^2}v^n\right|^2\fe^{it_n \partial_{x}^2}v^n\right);\\
   \Phi^n_2=& -2i\int_0^\tau\fe^{-i(t_n+s)\partial_x^2}\partial_x^{-1}
          \Big( \fe^{-i(t_n+s)\partial_x^2}\partial_x\bar v(t_n)
         \cdot  \big(\fe^{i(t_n+s)\partial_x^2}v(t_n)\big)^2\Big)ds\\
    &+2i\int_0^\tau\fe^{-i(t_n+s)\partial_x^2}\partial_x^{-1}
          \Big( \fe^{-i(t_n+s)\partial_x^2}\partial_x\bar v^n
    \cdot  \big(\fe^{i(t_n+s)\partial_x^2}v^n\big)^2\Big)ds;\\
      \Phi^n_3=& -2i\int_0^\tau\fe^{-i(t_n+s)\partial_x^2}\partial_x^{-1}
            \Big(\big|\fe^{i(t_n+s)\partial_x^2}v(t_n)\big|^2
                \fe^{i(t_n+s)\partial_x^2}\partial_xv(t_n)\Big)ds\\
                  & +2i\int_0^\tau\fe^{-i(t_n+s)\partial_x^2}\partial_x^{-1}
            \Big(\big|\fe^{i(t_n+s)\partial_x^2}v^n\big|^2
                \fe^{i(t_n+s)\partial_x^2}\partial_xv^n\Big)ds;\\
    \Phi^n_4=&i\int_0^\tau\fe^{-i(t_n+s)\partial_x^2}\partial_x^{-1}
          \Big( \fe^{-i(t_n+s)\partial_x^2}\partial_x\bar v(t_n)
    \cdot\fe^{is\partial_x^2}  \big(\fe^{it_n\partial_x^2}v(t_n)\big)^2\Big)ds \\
    &-i\int_0^\tau\fe^{-i(t_n+s)\partial_x^2}\partial_x^{-1}
          \Big( \fe^{-i(t_n+s)\partial_x^2}\partial_x\bar v^n
    \cdot\fe^{is\partial_x^2}  \big(\fe^{it_n\partial_x^2}v^n\big)^2\Big)ds;\\
    \Phi^n_5=&-2i\tau\big( M(v^n)-M(v(t_n)\big) v^n-2i\tau\big( P(v^n)-P(v(t_n)\big)\partial_x^{-1}v^n \\
    &+\left[\fe^{-2i\tau\left(M_0+P_0\partial_x^{-1}\right)}-1+2i\tau\left(M_0+P_0\partial_x^{-1}\right)\right]\left(v(t_n)-v^n\right).
\end{align*}
Next we estimate the above terms.
$\Phi^n_1$ can be divided into three parts
\begin{align*}
\Phi^n_1=&-i\tau \Pi_0\left[
            \big|\fe^{it_n\partial_x^2}v(t_n)\big|^2
                \fe^{it_n\partial_x^2}\big(v(t_n)-v^n\big)\right]\\
&-i\tau \Pi_0\left[\Big(\big|\fe^{it_n\partial_x^2}v(t_n)\big|^2-\big|\fe^{it_n\partial_x^2}v^n\big|^2\Big)
                \fe^{it_n\partial_x^2}v(t_n)\right]\\
&-i\tau \Pi_0\left[\Big(\big|\fe^{it_n\partial_x^2}v(t_n)\big|^2-\big|\fe^{it_n\partial_x^2}v^n\big|^2\Big)
                \fe^{it_n\partial_x^2}\big(v^n-v(t_n)\big)\right].
\end{align*}
Then by the  H\"older and Sobolev inequalities, we obtain that
\begin{align}\label{stability1}
\|\Phi^n_1\|_{H^\gamma}\leq C|\Phi^n_1|\leq C\tau\big( \|v^n-v(t_n)\|_{H^\gamma}+\|v^n-v(t_n)\|_{H^\gamma}^3\big).
\end{align}
Similarly, by Lemma \ref{lem:kato-Ponce}, we have that for any $\gamma\geq1$,
\begin{align}\label{stability2}
\|\Phi^n_j\|_{H^\gamma}\leq C\tau\big( \|v^n-v(t_n)\|_{H^\gamma}+\|v^n-v(t_n)\|_{H^\gamma}^3\big),
 \quad j=2,\cdots, 5.
\end{align}
Therefore for any $\gamma\geq1$,
\begin{align}\label{stability3}
\|\Phi^n\big(v(t_n)\big)-\Phi^n\big(v^n\big)\|_{H^\gamma}
   \leq &(1+C\tau) \|v^n-v(t_n)\|_{H^\gamma}
        +C\tau\|v^n-v(t_n)\|_{H^\gamma}^3.
   \end{align}
Combining the above estimates, we conclude that
\begin{align*}
\|v(t_{n+1})-v^{n+1}\|_{H^\gamma}
   \leq &C\tau^2+(1+C\tau) \|v^n-v(t_n)\|_{H^\gamma}
        +C\tau\|v^n-v(t_n)\|_{H^\gamma}^3.
\end{align*}
By iteration and Gronwall's inequalities, we get
\begin{align*}
\big\|v(t_{n+1})-v^{n+1}\big\|_{H^\gamma}
\le C\tau^2\sum\limits_{j=0}^n(1+C\tau)^j\le C \tau,\quad n=0,1,\ldots,\frac{T}{\tau}-1.
\end{align*}
This finishes the proof of the convergence result.

\subsection{The proof of the Corollary \ref{corollary}}
For \eqref{v-2-4}, applying the inequality
$$
\big|\fe^{2isk_2k_3}-1\big|\le 2\tau^{\frac12-} |k_2|^{\frac12-}|k_3|^{\frac12-},
   \quad \mbox{ for any } s\in [0,\tau],
$$
then we can replace \eqref{equal} with
 \begin{equation}
 \eqref{v-2-4}= \tau^{\frac32-} \mathscr{F}T_3(k^{-1}k_1k_2^{\frac12-}k_3^{\frac12-};v)(k).
 \end{equation}
 Therefore, we have
\begin{align}\label{v-int2}
v(t_{n+1})=
&\Phi^n\big(v(t_n)\big)+\tau^{\frac32-} T_3(k^{-1}k_1k_2^{\frac12-}k_3^{\frac12-};v)+\tau^2\Big(T_1(1;v)+\mathscr{F}T_3(k_2k_3;v)(0)+T_5(1;v)\Big).
\end{align}
Similarly as before,
\begin{align*}
v(t_{n+1})-v^{n+1}=&v(t_{n+1})-\Phi^n\big(v(t_n)\big)+ \Phi^n\big(v(t_n)\big)-\Phi^n\big(v^n\big)\\
          \triangleq &\mathcal{\tilde L}^n+\Phi^n\big(v(t_n)\big)-\Phi^n\big(v^n\big),
\end{align*}
where
$$
\mathcal{\tilde L}^n=\tau^{\frac32-} T_3(k^{-1}k_1k_2^{\frac12-}k_3^{\frac12-};v)+\tau^2\Big(T_1(1;v)+\mathscr{F}T_3(k_2k_3;v)(0)+T_5(1;v)\Big).
$$
Then from  Lemma \ref{lem:An}, we obtain that 
\begin{align}\label{est:err2}
\|\mathcal{\tilde L}^n\|_{H^1}\leq C\tau^{\frac32-},
\end{align}
where the  constant $C>0$
depending only on  $\|u\|_{L^\infty((0,T);H^1)}$.
This together with \eqref{stability3} yields
\begin{align*}
\|v(t_{n+1})-v^{n+1}\|_{H^1}
   \leq &C\tau^{\frac32-}+(1+C\tau) \|v^n-v(t_n)\|_{H^1}
        +C\tau\|v^n-v(t_n)\|_{H^1}^3.
\end{align*}
By iteration and Gronwall's inequalities, we get
\begin{align*}
\big\|v(t_{n+1})-v^{n+1}\big\|_{H^1}
\le C\tau^{\frac32-}\sum\limits_{j=0}^n(1+C\tau)^j\le C \tau^{\frac12-},\quad n=0,1,\ldots,\frac{T}{\tau}-1.
\end{align*}
Hence, we get  the desired convergence result.

\vskip 25pt

\section{Further discussion on the almost mass-conserved scheme}\label{sec:5}

 Let $V^n:=\fe^{-it_n\partial_x^2}U^n$. Accordingly, from \eqref{G1}--\eqref{scheme}, we have that 
 \begin{align}\label{V}
 V^{n+1}=V^n+\tilde F^n(V^n)+\tilde G_1^n(V^n)+\tilde G_2^n(V^n),
 \end{align}
 where $\Phi^n$ is defined in \eqref{Phi-def},
 $$
 \tilde F^n(V^n)=\Phi^n(V^n)-V^n,
 $$
and the functionals $\tilde G_1^n$, $\tilde G_2^n$  are given by
\begin{align}
\tilde G^n_1(V)=\tilde H^n(V)V ;
\quad
\tilde G^n_2(V)=-\frac12 \big(\tilde H^n(V)\big)^2V
      -M(u_0)^{-1}\>\tilde H^n(V)\>  \big\langle \tilde F^n(V), V\big\rangle \>V\label{def:G1},
\end{align}
and
\begin{align}
\tilde H^n(V)=-M(u_0)^{-1}\left( \big\langle \tilde F^n(V), V\big\rangle
  +\frac12 \big\|\tilde F^n(V)\big\|_{L^2}^2\right).
\end{align}

The proof of  Theorem  \ref{result} depends on the following key lemmas.
\begin{lemma}\label{def:lemG1}
Let $\tilde G^n_1$ be defined in  \eqref{def:G1}, then the following inequalities holds:
\begin{itemize}
\item[(i)]
If  $\gamma>\frac32$,  then there exists some constant  $C=C(\|V\|_{H^\gamma}, \|u_0\|_{L^2})>0$  such that 
\begin{align*}
2 \big\langle \tilde G_1^n(V), V\big\rangle
         +2\big\langle &\tilde F^n(V), V\big\rangle
+\big\|\tilde F^n(V)\big\|_{L^2}^2
  \leq C\tau^2\big|M(V)-M(u_0)\big|,
\end{align*}
moreover,
$$
\big\|\tilde G_1^n(V)\big\|_{H^\gamma}\leq C\tau^2.
$$
\item[(ii)]
If  $\gamma\geq 1$, then  there exists some constant  $C=C(\|V\|_{H^\gamma}, \|u_0\|_{L^2})>0$  such that 
\begin{align*}
2 \big\langle \tilde G_1^n(V), V\big\rangle
         +2\big\langle &\tilde F^n(V), V\big\rangle
+\big\|\tilde F^n(V)\big\|_{L^2}^2
  \leq C\tau^{\frac32-}\big|M(V)-M(u_0)\big|,
\end{align*}
moreover,
$$
\big\|\tilde G_1^n(V)\big\|_{H^\gamma}\leq C\tau^{\frac32-}.
$$
\end{itemize}
\end{lemma}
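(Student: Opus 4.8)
The plan is to reduce both assertions to a single bound on the one-step mass defect of the base scheme $\Phi^n$, and then to recover the extra power of $\tau$ from the near-conservation of momentum along the iteration. Since $\tilde H^n(V)$ is a real scalar and $\tilde G_1^n(V)=\tilde H^n(V)\,V$, we have $\langle\tilde G_1^n(V),V\rangle=\tilde H^n(V)\,\langle V,V\rangle$. Inserting the definition of $\tilde H^n(V)$, the three terms on the left collapse into a single product,
\begin{align*}
2\big\langle\tilde G_1^n(V),V\big\rangle+2\big\langle\tilde F^n(V),V\big\rangle+\big\|\tilde F^n(V)\big\|_{L^2}^2
=\Big(2\big\langle\tilde F^n(V),V\big\rangle+\big\|\tilde F^n(V)\big\|_{L^2}^2\Big)\Big(1-\frac{\|V\|_{L^2}^2}{\|u_0\|_{L^2}^2}\Big),
\end{align*}
where the second factor equals $\big(M(u_0)-M(V)\big)/M(u_0)$ and hence already produces the advertised factor $|M(V)-M(u_0)|$. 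The first factor is precisely the one-step mass defect
\begin{align*}
2\big\langle\tilde F^n(V),V\big\rangle+\big\|\tilde F^n(V)\big\|_{L^2}^2=\big\|\Phi^n(V)\big\|_{L^2}^2-\|V\|_{L^2}^2 .
\end{align*}
Both claims then follow at once---the displayed inequality, and also $\|\tilde G_1^n(V)\|_{H^\gamma}=|\tilde H^n(V)|\,\|V\|_{H^\gamma}\le C\tau^2$, since $|\tilde H^n(V)|$ is a fixed multiple of the same defect---provided I establish $\big\|\Phi^n(V)\big\|_{L^2}^2-\|V\|_{L^2}^2=O(\tau^2)$.

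Here $\|\tilde F^n(V)\|_{L^2}^2=O(\tau^2)$ is immediate from the first-order bound $\|\tilde F^n(V)\|_{H^\gamma}=O(\tau)$ already used in Section~\ref{proof}, so the whole content is the cancellation $\langle\tilde F^n(V),V\rangle=O(\tau^2)$, i.e. the $O(\tau)$ part of $\Phi^n(V)-V$ must be $L^2$-orthogonal to $V$. I would expand $\Phi^n(V)-V$ to first order in $\tau$ and test against $V$. The non-resonant and zero-mode pieces reconstruct, to this order, the continuous vector field $-i\tau\,\fe^{-it_n\partial_x^2}\big[|\fe^{it_n\partial_x^2}V|^2\,\fe^{it_n\partial_x^2}V\big]$, whose pairing with $V$ vanishes because $\mathrm{Re}\int(-i|w|^2w)\bar w\,dx=0$; this is exactly the continuous mass conservation transplanted to the scheme. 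The contribution of the factor $\fe^{-2i\tau(M_0+P_0\partial_x^{-1})}$ pairs as $\langle iV,V\rangle=0$ for the $M_0$-part and through the skew-adjointness of $\partial_x^{-1}$ for the $P_0$-part.

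The delicate point---and the step I expect to be the main obstacle---is that the scheme freezes mass and momentum at the initial values $M_0,P_0$ rather than at $M(V),P(V)$, so the resonant term carries the mismatch $-2i\tau\big[(M_0-M(V))+(P_0-P(V))\partial_x^{-1}\big]V$. Testing against $V$, the mass mismatch dies ($\langle iV,V\rangle=0$), but the momentum mismatch leaves $-2\tau\,(P_0-P(V))\,\langle i\partial_x^{-1}V,V\rangle$, which is only $O(\tau)$ for a generic argument. The missing power is recovered by using the estimate only with $V=V^n$: along the iteration $V^n$ stays $O(\tau)$-close in $H^1$ to the exact solution (the convergence estimate of Theorem~\ref{result1}, proved in tandem), and since the exact flow conserves momentum, $|P(V^n)-P_0|\lesssim\|V^n-v(t_n)\|_{H^1}\big(\|V^n\|_{H^1}+\|v(t_n)\|_{H^1}\big)=O(\tau)$, turning the surviving term into $O(\tau^2)$. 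This closes part~(i).

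For part~(ii) I would run the identical argument but feed in the $H^1$ local-error bound $\|\mathcal{\tilde L}^n\|_{H^1}=O(\tau^{\frac32-})$ from the proof of Corollary~\ref{corollary}; then $\|V^n-v(t_n)\|_{H^1}=O(\tau^{\frac12-})$, whence $|P(V^n)-P_0|=O(\tau^{\frac12-})$, and the momentum mismatch now contributes $O(\tau)\cdot O(\tau^{\frac12-})=O(\tau^{\frac32-})$. This downgrades both the mass defect and $|\tilde H^n(V)|$ to $O(\tau^{\frac32-})$, giving the two stated bounds with exponent $\tfrac32-$.
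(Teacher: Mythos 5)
Your algebraic skeleton coincides with the paper's: your collapse identity is exactly the paper's \eqref{lemma1:formula} in the form $2\tilde H^n(V)\big(M(V)-M(u_0)\big)$, the cubic Duhamel pairing vanishes for the reason you give, and everything reduces to $\langle\tilde F^n(V),V\rangle=O(\tau^2)$ together with $\|\tilde F^n(V)\|_{L^2}^2=O(\tau^2)$. But the step you single out as the main obstacle rests on a computational error, and the detour you build around it does not prove the lemma as stated. The momentum-mismatch term pairs to \emph{exactly zero}, for every $V$: since $P(f)=\frac1{2\pi}\int_\T f\,\partial_x\bar f\,dx$ is purely imaginary (integration by parts gives $\overline{P(f)}=-P(f)$), the scalar $2i\tau\big(P(V)-P_0\big)$ is \emph{real}, so it factors out of the real pairing $\langle f,g\rangle=\re\int_\T f\bar g\,dx$, leaving $\langle\partial_x^{-1}V,V\rangle=\re\big(2\pi\sum_{k\neq0}(ik)^{-1}|\hat V_k|^2\big)=0$ by skew-adjointness of $\partial_x^{-1}$. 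Your surviving term $-2\tau\,(P_0-P(V))\,\langle i\partial_x^{-1}V,V\rangle$ was obtained by pulling the purely imaginary scalar $P_0-P(V)$ outside $\re\langle\cdot,\cdot\rangle$, which is not legitimate; as written it is an imaginary multiple of a real number and could not equal the real pairing unless zero. This exact cancellation is what the paper uses immediately after \eqref{Fn}, and no information on $|P(V)-P_0|$ is needed.

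This matters in two ways. First, the lemma is asserted for arbitrary $V$ with $C=C(\|V\|_{H^\gamma},\|u_0\|_{L^2})$: for generic $V$ your argument only yields $|\tilde H^n(V)|\le C\tau$, hence $\|\tilde G_1^n(V)\|_{H^\gamma}\le C\tau$, short of the stated $\tau^2$; your patch via $|P(V^n)-P_0|\lesssim\|V^n-v(t_n)\|_{H^1}$ covers only scheme iterates, and the relevant convergence is that of Theorem \ref{result} (not Theorem \ref{result1}, which concerns the unmodified scheme), whose proof invokes this very lemma --- so the argument would at best survive as a simultaneous induction, and the lemma would have to be restated. Second, in part (ii) you attribute the exponent $\frac32-$ to the phantom momentum term, whereas the genuine source is different: for $\gamma\ge1$ the remainder $\tau^2T_3(k^{-1}k_1k_2k_3;V)$ in \eqref{Fn} is no longer controllable, since Lemma \ref{lem:An}(i) requires $\gamma>\frac32$; one must instead use the decomposition behind Corollary \ref{corollary}, replacing it by $\tau^{\frac32-}T_3(k^{-1}k_1k_2^{\frac12-}k_3^{\frac12-};V)$ and applying Lemma \ref{lem:An}(ii)(iii). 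Running the ``identical argument'' in $H^1$ would leave that remainder unbounded. With the exact cancellation above and this substitution, both (i) and (ii) follow for all $V$, which is precisely the paper's proof.
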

\begin{proof}
(i)
According to \eqref{v-1st-phi} and \eqref{vtn+1-app}, we find that
\begin{align}\label{Fn}
\tilde F^n\big(V\big)
    =&-i\int_0^\tau \fe^{-i(t_n+s)\partial_x^2}
    \left[|\fe^{i(t_n+s)\partial_x^2}V|^2\,\fe^{i(t_n+s)\partial_x^2}V\right]\,ds\notag\\
    &\quad  +2i\tau\big( M(V)-M(v(t_n)\big) V+2i\tau\big( P(V)-P(v(t_n)\big)\partial_x^{-1}V\notag\\
     &\quad -\tau^2\Big(T_1(1;V)+\mathscr{F}T_3(k_2k_3;V)(0)+T_3(k^{-1}k_1k_2k_3;V)\Big).
\end{align}
Then we have
\begin{align*}
  \big\langle \tilde F^n(V), V\big\rangle
 =&  \Big\langle -i\int_0^\tau \fe^{-i(t_n+s)\partial_x^2}
    \big[|\fe^{i(t_n+s)\partial_x^2}V|^2\,\fe^{i(t_n+s)\partial_x^2}V\big]\,ds, \ V\Big\rangle\\
    &+2\tau \Big\langle i\big( M(V)-M(v(t_n)\big) V+i\big( P(V)-P(v(t_n)\big)\partial_x^{-1}V,V\Big\rangle\\
   & -\tau^2   \Big\langle T_1(1;V)+\mathscr{F}T_3(k_2k_3;V)(0)+T_3(k^{-1}k_1k_2k_3;V), \ V\Big\rangle.
\end{align*}
The first term is obviously equal to 0. We claim that the second term is also equal to 0. Indeed,  since 
$$
iP(f)=-\frac1{2\pi}\mbox{Im}  \int_{\T} f \partial_x\bar{f}\,d x\in \R,
$$
we find that 
\begin{align*}
\Big\langle i\big( M(V)-&M(v(t_n)\big) V+i\big( P(V)-P(v(t_n)\big)\partial_x^{-1}V,V\Big\rangle\\
=&\big( M(V)-M(v(t_n)\big)\langle i V,V\rangle+i\big( P(V)-P(v(t_n)\big)\langle\partial_x^{-1}V,V\rangle
=0.
\end{align*}
Then we get
\begin{align*}
  \big\langle \tilde F^n(V), V\big\rangle=&
       -\tau^2\big\langle T_1(1;V)+\mathscr{F}T_3(k_2k_3;V)(0)+T_3(k^{-1}k_1k_2k_3;V), \ V\big\rangle.
\end{align*}
From Lemma \ref{lem:An} (i) (iii), we obtain
\begin{align}\label{est:T3}
\big\| T_1(1;V)\|_{L^2}+\big|\mathscr{F}T_3(k_2k_3;V)(0)\big|+\big\|T_3(k^{-1}k_1k_2k_3;V)\big\|_{L^2} 
\lesssim\big\|V\big\|_{H^\gamma}+ \big\|V\big\|_{H^\gamma}^3.
\end{align}
This implies that
\begin{align}\label{est:H1}
\big|  \big\langle \tilde F^n(V), V\big\rangle\big|\lesssim \tau^2\big(\|V\|_{H^{\gamma}}^2+ \|V\|_{H^{\gamma}}^4\big).
\end{align}
By  \eqref{Fn} and  \eqref{est:T3}, we have
\begin{align}\label{est:fn}
\big\|\tilde F^n(V)\big\|_{L^2}^2\lesssim \tau^2\big(\|V\|_{H^{\gamma}}^2+ \|V\|_{H^{\gamma}}^6\big).
\end{align}
Hence, there exists $C>0$ depending only on $\|V\|_{H^\gamma}$ and $\|u_0\|_{L^2}$, such that
\begin{align}\label{est:H}
\big|\tilde H^n(V)\big|\leq C\tau^2.
\end{align}
This yields that
$$
\big\|\tilde G_1^n(V)\big\|_{H^\gamma}\leq C\tau^2.
$$

Furthermore, we have
\begin{align}\label{lemma1:formula}
2 \big\langle \tilde G_1^n(V), V\big\rangle
         +2\big\langle \tilde F^n(V), V\big\rangle
+\big\|\tilde F^n(V)\big\|_{L^2}^2
  =2\tilde H^n(V)\big(M(V)-M(u_0)\big).
\end{align}
From  \eqref{est:H}, the above equality is controlled by
$
C\tau^2\left|M(V)-M(u_0)\right|.
$

(ii)
According to \eqref{v-1st-phi} and \eqref{v-int2}, we can replace $\tau^2T_3(k^{-1}k_1k_2k_3;V)$ in \eqref{Fn}  by 
$$\tau^{\frac32-} T_3(k^{-1}k_1k_2^{\frac12-}k_3^{\frac12-};V).$$ 
Then arguing similarly as in the proof of (i) and applying Lemma \ref{lem:An} (ii) (iii) instead, it infers that 
\begin{align}\label{est:H2}
\big|  \big\langle \tilde F^n(V), V\big\rangle\big|\leq C \tau^{\frac32-};
\quad
\big|\tilde H^n(V)\big|\leq C\tau^{\frac32-}.
\end{align}
Hence we have for any $\gamma\geq1$
$$
\big\|\tilde G_1^n(V)\big\|_{H^\gamma}\leq C\tau^{\frac32-}.
$$
Moreover, by \eqref{lemma1:formula} and \eqref{est:H2}, we obtain
\begin{align*}
2 \big\langle \tilde G_1^n(V), V\big\rangle
         +2\big\langle &\tilde F^n(V), V\big\rangle
+\big\|\tilde F^n(V)\big\|_{L^2}^2
  \leq C\tau^{\frac32-}\big|M(V)-M(u_0)\big|.
\end{align*}
This finishes the proof of the lemma.
\end{proof}

\begin{lemma}\label{def:lemG2}
Let the functionals  $\tilde G_1^n$, $\tilde G^n_2$ be defined in \eqref{def:G1}.
\begin{itemize}
\item[(i)]
If $\gamma>\frac32$, then there exists some constant  $C=C(\|V\|_{H^\gamma}, \|u_0\|_{L^2})>0$  such that 
\begin{align*}
2 \big\langle \tilde G_2^n(V), V\big\rangle
       +2 \big\langle \tilde F^n(V), \tilde G_1^n(V)\big\rangle
         +\big\|\tilde G_1^n(V)\big\|_{L^2}^2
  \leq C\tau^4\big|M(V)-M(u_0)\big|,
\end{align*}
moreover,
$$
\big\|\tilde G_2^n(V)\big\|_{H^\gamma}\leq C\tau^4.
$$

\item[(ii)]
If $\gamma\geq1$, then there exists some constant  $C=C(\|V\|_{H^\gamma}, \|u_0\|_{L^2})>0$  such that 
\begin{align*}
2 \big\langle \tilde G_2^n(V), V\big\rangle
       +2 \big\langle \tilde F^n(V), \tilde G_1^n(V)\big\rangle
         +\big\|\tilde G_1^n(V)\big\|_{L^2}^2
  \leq C\tau^{3-}\big|M(V)-M(u_0)\big|,
\end{align*}
moreover,
$$
\big\|\tilde G_2^n(V)\big\|_{H^\gamma}\leq C\tau^{3-}.
$$
\end{itemize}

\end{lemma}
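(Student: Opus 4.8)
The plan is to exploit that both $\tilde G_1^n(V)$ and $\tilde G_2^n(V)$ are \emph{real scalar multiples} of $V$, which reduces the entire quadratic expression to an algebraic identity among the real scalars $H:=\tilde H^n(V)$, $a:=\langle \tilde F^n(V),V\rangle$ and $\langle V,V\rangle$. The $H^\gamma$-norm bound comes first and is immediate: since
$$
\tilde G_2^n(V)=-\Big(\tfrac12 H^2+M(u_0)^{-1}Ha\Big)V,
$$
the scalar coefficient is $\le |H|^2+|H|\,|a|\le C\tau^4$ by the estimates $|H|\le C\tau^2$ (Lemma~\ref{def:lemG1}(i)) and $|a|\le C\tau^2$ (see \eqref{est:H1}); multiplying by $\|V\|_{H^\gamma}$ gives $\|\tilde G_2^n(V)\|_{H^\gamma}\le C\tau^4$.

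Next I would expand the three inner-product terms, using the parallelism to $V$ and that $H,a$ are real:
$$
2\langle \tilde G_2^n(V),V\rangle=-\big(H^2+2M(u_0)^{-1}Ha\big)\langle V,V\rangle,\quad 2\langle \tilde F^n(V),\tilde G_1^n(V)\rangle=2Ha,\quad \|\tilde G_1^n(V)\|_{L^2}^2=H^2\langle V,V\rangle.
$$
The crucial point is that the two $H^2\langle V,V\rangle$ contributions cancel, so the sum collapses to $2Ha\big(1-M(u_0)^{-1}\langle V,V\rangle\big)$. Proceeding exactly as in the derivation of \eqref{lemma1:formula} (where $\langle V,V\rangle$ is identified with $M(V)$), this equals
$$
-2M(u_0)^{-1}\,\tilde H^n(V)\,\langle \tilde F^n(V),V\rangle\,\big(M(V)-M(u_0)\big).
$$
The emergence of the mass-defect factor $M(V)-M(u_0)$ is precisely what the coefficient of $\tilde G_2^n$ was designed to produce.

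The claimed estimate then follows by bounding the last display by $|\tilde H^n(V)|\,|\langle \tilde F^n(V),V\rangle|\,|M(V)-M(u_0)|\le C\tau^4|M(V)-M(u_0)|$. Part (ii), for $\gamma\ge1$, is structurally identical: Lemma~\ref{def:lemG1}(ii) and \eqref{est:H2} supply $|\tilde H^n(V)|\le C\tau^{\frac32-}$ and $|\langle \tilde F^n(V),V\rangle|\le C\tau^{\frac32-}$, so both the scalar coefficient of $\tilde G_2^n(V)$ and the surviving inner-product term acquire the product $\tau^{\frac32-}\cdot\tau^{\frac32-}=\tau^{3-}$, yielding $\|\tilde G_2^n(V)\|_{H^\gamma}\le C\tau^{3-}$ and the bound $C\tau^{3-}|M(V)-M(u_0)|$.

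The one delicate point, and really the heart of the lemma, is verifying the algebraic cancellation exactly: the coefficient $-\tfrac12 H^2-M(u_0)^{-1}Ha$ defining $\tilde G_2^n$ must be tuned so that, after summation, every $O(\tau^4)$ term that is \emph{not} accompanied by the factor $M(V)-M(u_0)$ disappears. Were any such term to survive, one would obtain only $C\tau^4$ in place of $C\tau^4|M(V)-M(u_0)|$, and the telescoping that upgrades the mass error to order $\tau^5$ in Theorem~\ref{result} would break down. Beyond this, the proof is entirely a matter of inserting the scalar estimates already secured in Lemma~\ref{def:lemG1}.
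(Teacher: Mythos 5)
Your proposal is correct and follows essentially the same route as the paper: the paper's proof likewise computes $2\langle \tilde F^n(V),\tilde G_1^n(V)\rangle=2\tilde H^n(V)\langle \tilde F^n(V),V\rangle$ and $\|\tilde G_1^n(V)\|_{L^2}^2=(\tilde H^n(V))^2M(V)$, observes exactly the cancellation you describe to arrive at the identity \eqref{formula-lemma2} with the factor $\big(M(u_0)-M(V)\big)/M(u_0)$, and then inserts the scalar bounds \eqref{est:H1} and \eqref{est:H} (respectively \eqref{est:H2} for part (ii)) to obtain $C\tau^4$ (respectively $C\tau^{3-}$) together with the corresponding $H^\gamma$-bounds on $\tilde G_2^n(V)$. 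Your explicit identification of $\langle V,V\rangle$ with $M(V)$ mirrors the paper's own (implicit) normalization in the derivation of \eqref{lemma1:formula}, so no substantive difference remains.
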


\begin{proof}
(i)
From the definition of $\tilde G_1^n(V)$  in \eqref{def:G1}, we find
\begin{align*}
2 \big\langle \tilde F^n(V), \tilde G_1^n(V)\big\rangle
  =2\tilde H^n(V)  \big\langle \tilde F^n(V), V\big\rangle;
  \quad
  \big\|\tilde G_1^n(V)\big\|_{L^2}^2
    =\big(\tilde H^n(V)\big)^2M(V).
\end{align*}
Note that
\begin{align}\label{formula-lemma2}
2 \big\langle \tilde G_2^n(V)&, V\big\rangle
       +2 \big\langle \tilde F^n(V), \tilde G_1^n(V)\big\rangle
         +\big\|\tilde G_1^n(V)\big\|_{L^2}^2\notag\\
  =&2\tilde H^n(V)  \big\langle \tilde F^n(V), V\big\rangle
  \frac{M(u_0)-M(V)}{M(u_0)}.
\end{align}
By \eqref{est:H1} and
\eqref{est:H}, the above equality
is controlled by
$$
 C\tau^4\big|M(V)-M(u_0)\big|.
$$
Moreover, by \eqref{est:H1} and
\eqref{est:H} again, we obtain
$$
\big\|\tilde G_2^n(V)\big\|_{H^\gamma}\leq C\tau^4.
$$

(ii)
From \eqref{est:H2} and \eqref{formula-lemma2},
we get directly
\begin{align*}
2 \big\langle \tilde G_2^n(V), V\big\rangle
       +2 \big\langle \tilde F^n(V), \tilde G_1^n(V)\big\rangle
         +\big\|\tilde G_1^n(V)\big\|_{L^2}^2
  \leq C\tau^{3-}\big|M(V)-M(u_0)\big|,
\end{align*}
and
$$
\big\|\tilde G_2^n(V)\big\|_{H^\gamma}\leq C\tau^{3-}.
$$
We obtain the conclusion of the lemma.
\end{proof}

\begin{proof}[\bf Proof of  Theorem \ref{result}]
Since $V^n=\fe^{-it_n\partial_x^2}U^n$, $v(t_n)=\fe^{-it_n\partial_x^2}u(t_n)$,
 we only need to prove the conclusion of Theorem \ref{result} holds for $V^n$ and $v(t_n)$.

From \eqref{V}, we have
\begin{align}
 V^{n+1}=\Phi^n(V^n)+\tilde G^n_1(V^n)+\tilde G^n_2(V^n),
 \end{align}
 where $\Phi^n(V^n)$ is defined in \eqref{Phi-def}. Then
 \begin{align*}
 V^{n+1}-v(t_{n+1})=&\Phi^n(V^n)-\Phi^n(v(t_n))+\tilde G^n_1(V^n)-\tilde G^n_1(v(t_n))
                    +\tilde G^n_2(V^n)-\tilde G^n_2(v(t_n))\\
                    &+\Phi^n(v(t_n))-v(t_{n+1})+\tilde G^n_1(v(t_n))+\tilde G^n_2(v(t_n)).
 \end{align*}
From the estimate on the functional $\Phi^n$ in \eqref{stability3}, we obtain for $\gamma\geq1$
\begin{align*}
\big\|\Phi^n\big(v(t_n)\big)-\Phi^n\big(V^n\big)\big\|_{H^\gamma}
   \leq &(1+C\tau) \big\|V^n-v(t_n)\big\|_{H^\gamma}
        +C\tau\big\|V^n-v(t_n)\big\|_{H^\gamma}^3.
   \end{align*}

 For the term $\tilde G^n_j(V^n)-\tilde G^n_j(v(t_n)), j=1,2$, the similar treatment as in Section \ref{proof},
  we get
%
\begin{align*}
 \big\|\tilde G^n_1(V^n)-\tilde G^n_1(v(t_n))\big\|_{H^\gamma}
     \leq C\tau\Big(\big\|V^n-v(t_n)\big\|_{H^\gamma}+ \big\|V^n-v(t_n)\big\|_{H^\gamma}^7\Big),
 \end{align*}
  and
 \begin{align*}
 \big\|\tilde G^n_2(V^n)-\tilde G^n_2(v(t_n))\big\|_{H^\gamma}
     \leq C\tau^2\Big(\big\|V^n-v(t_n)\big\|_{H^\gamma}+ \big\|V^n-v(t_n)\big\|_{H^\gamma}^{13}\Big).
 \end{align*}
From \eqref{est:ln}, we have for $\gamma>\frac32$
 \begin{align}
\big\|\Phi^n(v(t_n))-v(t_{n+1})\big\|_{H^\gamma}\leq C\tau^2.
\end{align}
Furthermore,
from Lemma \ref{def:lemG1} and
 Lemma \ref{def:lemG2}, we find
$$
\big\|\tilde G^n_1(v(t_n))\big\|_{H^\gamma}
   \leq C\tau^2, \quad \big\|\tilde G^n_2(v(t_n))\big\|_{H^\gamma}
   \leq C\tau^4.
$$
Putting together with the above estimates, we conclude that for any $\tau\le 1$,
\begin{align*}
\big\| V^{n+1}-v(t_{n+1})\big\|_{H^\gamma}
   \leq &C\tau^2+(1+C\tau) \big\|V^n-v(t_n)\big\|_{H^\gamma}
        +C\tau\big\|V^n-v(t_n)\big\|_{H^\gamma}^{13},
 \end{align*}
 where the constant  $C$ depends only on $\|u\|_{L^\infty((0,T);H^{\gamma})}$.

 By the iteration and Gronwall inequalities, we get
\begin{align*}
\big\|v(t_{n+1})-V^{n+1}\big\|_{H^\gamma}
\le C\tau^2\sum\limits_{j=0}^n(1+C\tau)^j\le C \tau,\quad n=0,1,\ldots,\frac{T}{\tau}-1.
\end{align*}
This implies the first-order convergence and the following a prior estimate:
\begin{equation}\label{bound}
  \big\|V^n\big\|_{H^\gamma}\leq C, \quad n=0,1\ldots,\frac{T}{\tau}.
\end{equation}
Here the positive constant $C$ depends only on $T$
and $\|u\|_{L^\infty((0,T);H^{\gamma})}$.

In addition, if $u_0\in H^1$,
from \eqref{est:err2} we have
\begin{align}
\big\|\Phi^n(v(t_n))-v(t_{n+1})\big\|_{H^1}\leq C\tau^{\frac32-}.
\end{align}
Furthermore,
from Lemma \ref{def:lemG1} and
 Lemma \ref{def:lemG2}, we find
$$
\big\|\tilde G^n_1(v(t_n))\big\|_{H^1}
   \leq C\tau^{\frac32-}, \quad \big\|\tilde G^n_2(v(t_n))\big\|_{H^1}
   \leq C\tau^{3-}.
$$
Putting together with the above estimates, we conclude that for any $\tau\le 1$,
\begin{align*}
\big\| V^{n+1}-v(t_{n+1})\big\|_{H^1}
   \leq &C\tau^{\frac32-}+(1+C\tau) \big\|V^n-v(t_n)\big\|_{H^1}
        +C\tau\big\|V^n-v(t_n)\big\|_{H^1}^{13},
 \end{align*}
 where the constant  $C$ depends only on $\|u\|_{L^\infty((0,T);H^1)}$.

 By the iteration and Gronwall inequalities, we get
\begin{align*}
\big\|v(t_{n+1})-V^{n+1}\big\|_{H^1}
\le C\tau^{\frac32-}\sum\limits_{j=0}^n(1+C\tau)^j\le C \tau^{\frac12-},\quad n=0,1,\ldots,\frac{T}{\tau}-1.
\end{align*}

Next we prove the almost mass conservation law.
From \eqref{V}, we have
\begin{subequations}
\begin{align}
M(V^{n+1})
     =&M(V^n)\notag\\
     &+2  \big\langle \tilde F^n(V^n), V^n\big\rangle
          +2 \big\langle \tilde G_1^n(V^n), V^n\big\rangle
           +\big\|\tilde F^n(V^n)\big\|_{L^2}^2\label{formula-a}\\
         & +2 \big\langle \tilde G_2^n(V^n), V^n\big\rangle
       +2 \big\langle \tilde F^n(V^n), \tilde G_1^n(V^n)\big\rangle
       +\big\|\tilde G_1^n(V^n)\big\|_{L^2}^2\label{formula-b}\\
        &  +2 \big\langle \tilde F^n(V^n), \tilde G_2^n(V^n)\big\rangle
           +2 \big\langle \tilde G_1^n(V^n), \tilde G_2^n(V^n)\big\rangle
           +\big\|\tilde G_2^n(V^n)\big\|_{L^2}^2\label{formula-c}.
\end{align}
\end{subequations}
By Lemma \ref{def:lemG1} and
 Lemma \ref{def:lemG2}, we get that
 \begin{align*}
 \eqref{formula-a}\leq C\tau^2\Big|M(V^n)-M(u_0)\Big|;
 \quad \eqref{formula-b}\leq C\tau^4\Big|M(V^n)-M(u_0)\Big|,
 \end{align*}
 and
  \begin{align*}
 2\left| \big\langle \tilde G_1^n(V^n), \tilde G_2^n(V^n)\big\rangle\right|
           +\big\|\tilde G_2^n(V^n)\big\|_{L^2}^2
  \leq C\tau^6.
\end{align*}
Therefore we deduce that
\begin{align}\label{errV}
M(V^{n+1})-M(V^n)
 \leq & C\tau^6+C\tau^2\Big|M(V^n)-M(u_0)\Big|
+2 \big\langle \tilde F^n(V^n), \tilde G_2^n(V^n)\big\rangle.
\end{align}
By the definitions of $G^n_2(V^n)$
  in \eqref{def:G1}, we get
 \begin{align*}
2\left|  \big\langle \tilde F^n(V^n), \tilde G_2^n(V^n)\big\rangle\right|
 \le \tilde H^n(V^n)^2\left|   \left\langle \tilde F^n(V^n), V^n\right\rangle\right|
   +2M(u_0)^{-1}\big\langle \tilde F^n(V^n), V^n\big\rangle^2\tilde H^n(V^n).
\end{align*}
Hence, by \eqref{est:H1}, \eqref{est:H} and \eqref {bound}, we obtain
 \begin{align*}
2\left| \big\langle \tilde F^n(V^n), \tilde G_2^n(V^n)\big\rangle\right|
\leq C\tau^6.
\end{align*}
Therefore, we conclude that
\begin{align}\label{est:err}
M(V^{n+1})-M(V^n)
 \leq C\tau^6+C\tau^2\Big|M(V^n)-M(u_0)\Big|.
\end{align}
Then by the iteration, we get
 \begin{align}
\left|M(V^n)-M(u_0)\right|\le  C\tau^5.
\end{align}
This finishes the proof of Theorem \ref{result}.

%
\end{proof}

\section{Numerical experience} \label{sec:numerical}

To set the initial data $u_0(x)$ with the desired regularity, we use the following strategy in \cite{lownls}.
 Choose $N=2^{10}$ and discrete the spatial domain $\bT$ with grid points
$x_j=j\frac{2\pi}{N}$ for $j=0,\ldots,N$.
Take a uniformly distributed random vectors $\mathrm{rand}(N,1)\in[0,1]^N$ and define
\begin{equation}\label{non-smooth}
u_0(x):=\frac{|\partial_{x,N}|^{-\gamma}\mathcal{U}^N}
{\||\partial_{x,N}|^{-\gamma}\mathcal{U}^N\|_{L^\infty}},\quad
x\in\bT, \quad \mathcal{U}^N=\mathrm{rand}(N,1)+i\mathrm{rand}(N,1).
\end{equation}
where the pseudo-differential operator $|\partial_{x,N}|^{-\gamma}$ for $\gamma\geq0$ reads: for Fourier modes $l=-N/2,\ldots$, $N/2-1$,
\begin{equation*}
 \left(|\partial_{x,N}|^{-\gamma}\right)
 _l=\left\{\begin{split}
 &|l|^{-\gamma}\quad \mbox{if}\ l\neq0,\\
  &0\qquad\  \mbox{if}\ l=0.
  \end{split}\right.
\end{equation*}
Thus, we get $u_0\in H^\gamma(\bT)$ for any $\gamma\geq0$. Now we take $\tau=10^{-5}$ and obtain  Figure \ref{fig:NLRI}.

\begin{figure}[t!]
$$\begin{array}{cc}
\psfig{figure=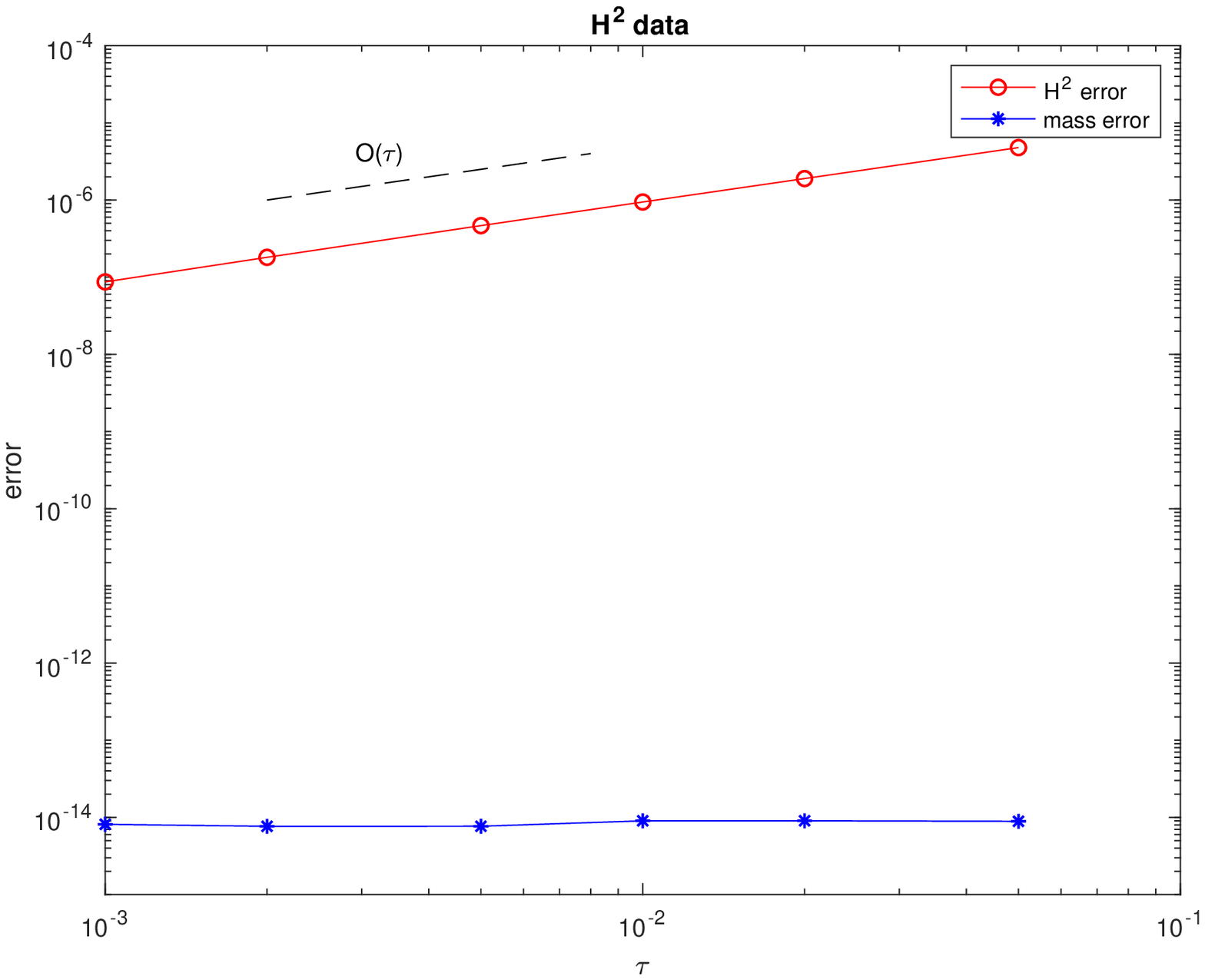,height=6.7cm,width=6.4cm}&
\psfig{figure=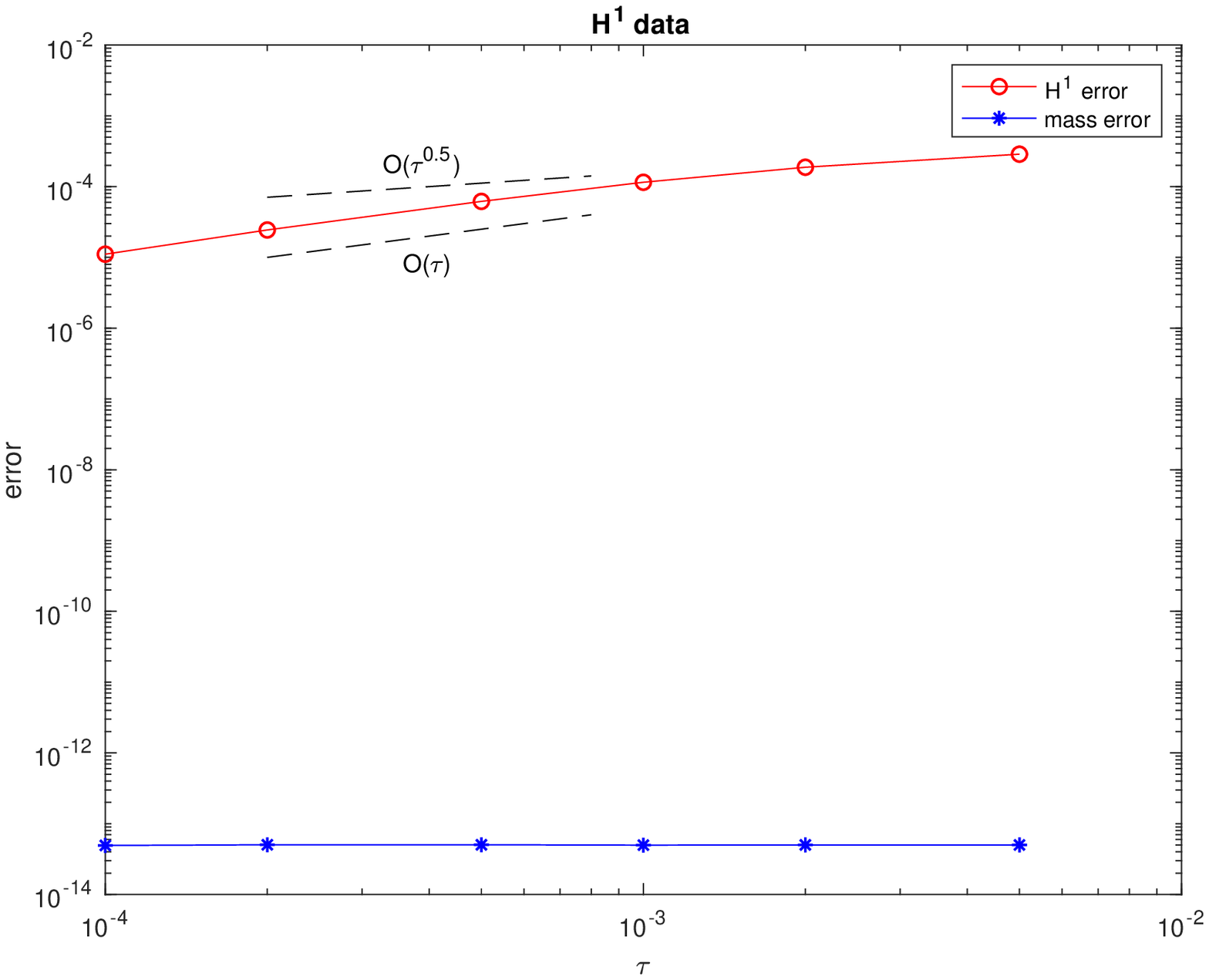,height=6.7cm,width=6.4cm}
\end{array}$$
\caption{Convergence of (NLRI) \eqref{scheme}:   error $\|u-u^n\|_{H^\gamma}$ at $t_n=T=2$  when $\gamma=2$ (left)  and when $\gamma=1$ (right).
}
\label{fig:NLRI}
\end{figure}

The numerical results imply that the scheme \eqref{scheme} has the first-order accuracy of $u(t_n)-u^n$ in $H^2$-norm with initial data in $H^2$, while the mass of the numerical solution is almost conserved (around $10^{-14}$). Furthermore, for $H^1$-data,  $u(t_n)-u^n$ in $H^1$-norm is a little bit better than 0.5 order accuracy.

\section{Conclusion} \label{sec:conclusion}
In this work, we constructed a first-order Fourier integrator for solving the cubic nonlinear Schr\"odinger equation in one dimension. Our designation of the scheme is based on
the exponential-type integration and the Phase-Space analysis of the nonlinear dynamics.
The convergence theorem was established to prove that the first-order accuracy in $H^\gamma$ with initial data in $H^{\gamma}$ for any $\gamma >\frac32$,
where the regularity requirements are lower than existing methods so far.
  Further, we designed a modified numerical scheme
  to obtain the first-order convergence in $H^{\gamma}$ with $H^\gamma$-data meanwhile
  keeps the fifth-order mass convergence.
 By our method, the scheme can be constructed to obtain the arbitrary high-order mass convergence.
 Numerical results were reported to justify the theoretical results.

\vskip 25pt

\bibliographystyle{model1-num-names}

\end{document}